\newtheorem{thm}{Theorem}[section]
\newtheorem{prop}[thm]{Proposition}
\newtheorem{lem}[thm]{Lemma}
\newtheorem{cor}[thm]{Corollary}
\numberwithin{equation}{section}
\theoremstyle{definition}
\newtheorem{remark}[thm]{Remark}
\DeclareSymbolFont{cyrletters}{OT2}{wncyr}{m}{n}
\DeclareMathSymbol{\Sha}{\mathalpha}{cyrletters}{"58}
\newcommand{\CH}{{\rm CH}}
\newcommand{\Pic}{{\rm Pic}}
\newcommand{\im}{{\rm Im}}
\newcommand{\rk}{{\rm rk}}
\newcommand{\cal}{\mathcal}
\newcommand{\kk}{{\cal K}}
\newcommand{\kl}{{\cal L}}
\newcommand{\km}{{\cal M}}
\newcommand{\ko}{{\cal O}}
\newcommand{\ks}{{\cal S}}
\newcommand{\kt}{{\cal T}}
\newcommand{\LL}{\mathbb{L}}
\newcommand{\ZZ}{\mathbb{Z}}
\newcommand{\QQ}{\mathbb{Q}}
\newcommand{\CC}{\mathbb{C}}
\newcommand{\PP}{\mathbb{P}}
\newcommand{\hh}{\mathfrak{h}}
\renewcommand{\to}{\xymatrix@1@=15pt{\ar[r]&}}
\renewcommand{\rightarrow}{\xymatrix@1@=15pt{\ar[r]&}}
\renewcommand{\leftarrow}{\xymatrix@1@=15pt{&\ar[l]}}
\renewcommand{\mapsto}{\xymatrix@1@=15pt{\ar@{|->}[r]&}}
\renewcommand{\twoheadrightarrow}{\xymatrix@1@=18pt{\ar@{->>}[r]&}}
\renewcommand{\hookrightarrow}{\xymatrix@1@=15pt{\ar@{^(->}[r]&}}
\newcommand{\hook}{\xymatrix@1@=15pt{\ar@{^(->}[r]&}}
\newcommand{\congpf}{\xymatrix@L=0.6ex@1@=15pt{\ar[r]^-\sim&}}
\renewcommand{\cong}{\simeq}
\def\E{{\mathcal E}}
\begin{document}

\title[]{Nodal quintic surfaces  and lines on cubic fourfolds}

\author[D.\ Huybrechts]{D.\ Huybrechts\\
with an appendix by J.\ Ottem}

\address{DH: Mathematisches Institut and Hausdorff Center for Mathematics,
Universit{\"a}t Bonn, Endenicher Allee 60, 53115 Bonn, Germany}
\email{huybrech@math.uni-bonn.de}
\address{JO: Department of Mathematics, University of Oslo, Box 1053, Blindern, 0316 Oslo, Norway}
\email{johnco@math.uio.no}

\begin{abstract} 

 We study nodal quintic surfaces with an even set of $16$ nodes as analogues of singular Kummer surfaces. 
 The interpretation of the natural double cover of an even $16$-nodal quintic as a certain Fano variety of lines could be viewed as a replacement for the additive structure of the cover of a singular Kummer surface by its associated  abelian surface.
 
Most of the results in this article can be seen as refinements of known facts and our arguments rely heavily on techniques developed by Beauville \cite{Beamu5}, Murre \cite{Murre}, and Voisin \cite{VoisinGT}. Results due to 
 Shen \cite{Shen0,Shen2} are particularly close to some of the statements. In this sense, the text is mostly expository (but with complete proofs), although our arguments often differ substantially from the original sources.

\end{abstract}

\maketitle

{\let\thefootnote\relax\footnotetext{The  author is supported by the 
ERC Synergy Grant HyperK (Grant agreement No. 854361) and the Hausdorff Center for Mathematics.}}
A nodal quartic surface $S\subset\PP^3$ can have at most $16$ nodes. When the maximum is attained, the minimal resolution $\tau\colon\tilde S\to S$ is a K3 surface and, more precisely, a Kummer surface. In other words, $S$ is then a singular Kummer surface, so it is isomorphic to the quotient $A/\iota$ of an abelian surface $A$ by its natural involution $\iota$.
Ultimately, the link to abelian surfaces relies on the fact that the set $\{E_i\}$ of the $16$ exceptional lines of the resolution
$\tau$ is even, i.e.\ $\sum E_i=0$ in $H^2(\tilde S,\ZZ/2\ZZ)$.
Then the double cover $\tilde A\to \tilde S$ ramified along $\bigcup E_i\subset\tilde S$ is the blow-up of $A$ in its two-torsion points. The situation was first studied by Kummer and marks the beginning of the theory of K3 surfaces. The situation for nodal quadrics and nodal cubics is also well understood. For example,
a nodal cubic can have at most four nodes and in this case it is the Cayley cubic.
\smallskip

The maximal number of nodes of a nodal quintic surface $D\subset \PP^3$ was determined by Beauville \cite{Beamu5}. It is $31$, which is realized by the Togliatti surface. However, in this case 
the set $\{E_i\}$ is not even. As Beauville showed  \cite[\S2]{Beamu5}, only a set of $16$ or of $20$ nodes can lead to an even
set of exceptional lines $\{E_i\}$. The two situations have been studied by Beauville \cite{Beauvcan} and Catanese \cite{Catanese}. In this article, we are concerned with even
$16$-nodal quintic surfaces, i.e.\ nodal quintic surfaces with exactly $16$ nodes such that 
the integral cohomology class $\sum E_i$ on the minimal resolution is divisible by two. In this case, $D\cong F/\iota$, where $F$ is a smooth surface
of general type endowed with an involution $\iota$ with $16$ fixed points. Its blow-up in the
fixed points can be viewed as the double cover $\tilde F\to \tilde D$ of the minimal resolution
$\tilde D\to D$ ramified along the union $\bigcup E_i\subset\tilde D$ of all exceptional lines. In this sense, $D$ is the analogue of a singular Kummer surface and $F$ is the quintic analogue of the abelian surface $A$. It is now tempting to try to develop a theory that parallels the classical  theory of abelian surfaces and their associated (singular) Kummer surfaces. But how far can one push this analogy?  The following picture already includes the objects that we will study.

\begin{picture}(200,230)
\put(63,205){$\xymatrix@C=18pt@R=18pt{ \tilde A\ar[d]\ar[r]&\tilde S\ar@<-3ex>[d]\phantom{DDD}\\
\iota \circlearrowright A\phantom{\iota\circlearrowleft~}\ar<1.5ex,-7.5ex>;[r]!<10ex,0ex>&S\subset \PP^3}$}
\put(63,125){$\xymatrix@C=18pt@R=18pt{\tilde F\ar[d]\ar[r]&\tilde D\ar@<-3ex>[d]\phantom{DDD}\\
\iota \circlearrowright F\phantom{\iota\circlearrowleft~}\ar<1.5ex,-7.5ex>;[r]!<10ex,0ex>\ar[r]&D\subset \PP^3}$}
\put(230,208){$H^\ast(A,\ZZ)^-= H^1(A,\ZZ)\oplus H^3(A,\ZZ)$}
\put(230,188){$\CH(A)^-= A\times \hat A$}
\put(230,168){$\hh(A)^-= \hh^1(A)\oplus\hh^3(A)$}

\put(230,128){$H^\ast(F,\ZZ)^-= H^2(F,\ZZ)^-$}
\put(230,108){$\CH(F)^-= \CH_0(F)^-\oplus \CH_1(F)^-$}
\put(230,88){$\hh(F)^-= \hh^2(F)^-$}
\end{picture}
\vskip-2.8cm
The first step is to find the correct analogue of the additional geometric structure
of the double cover $A\to S$ provided by the interpretation of $A$ as an abelian surface.
Already the work of Fano \cite{Fano} and Togliatti \cite{Togliatti} suggests where to look.
For any smooth cubic fourfold $X\subset\PP^5$ the variety of lines
$L'\subset X$ intersecting a fixed generic line $L\subset X$ is a smooth surface $F_L$
endowed with a natural involution $\iota$ mapping $L'$ to the residual line of $L\cup L'\subset
X\cap \overline{LL'}$.  By mapping $L'\in F_L$ to the point of intersection of
the plane $\PP^2\cong \overline{LL'}$ with a generic $\PP^3\subset\PP^5$, the quotient $D_L\coloneqq F_L/\iota$ is realized as an even $16$-nodal quintic surface with the nodes corresponding to the fixed points of $\iota$. From this perspective, the surfaces $F$ and $D$ have been studied by Voisin \cite{VoisinGT}. Understanding their geometry is a crucial step in her proof of the global Torelli theorem for cubic fourfolds. Viewing the double cover $F\to D$ of an even $16$-nodal quintic as $F\cong F_L$ can be seen as the analogue of viewing the double cover $A\to S$ as an abelian surface. \smallskip

There are clear limitations to the analogy between the double covers $A\to S$ of a singular Kummer quartic surface and the natural double covers $F\to D$ of  an even $16$-nodal quintic surface. For one, $F$ is a surface of general type with ample canonical bundle $\omega_F$ while the abelian surface $A$ has of course trivial canonical bundle. Also, from a Hodge theoretic perspective, $A$ and $F$ 
are  quite different. For example, the Hodge structure $H^1(F,\ZZ)$ of weight one is trivial, although 
the minimal resolutions $\tilde S$ and $\tilde D$ are both simply connected. Although this phenomenon is well known,
a clear topological reason for the different behaviour of the corresponding double covers $\tilde A\to\tilde S$ and
$\tilde F\to \tilde D$ seems missing. 
\smallskip

This article naturally splits in two interwoven parts: The geometry of even $16$-nodal quintic surfaces
$D\subset\PP^3$ together with their double covers $F\to D$ and the geometry
of Fano varieties of lines on cubic fourfolds.

\subsection{}
In the first section of this article we collect the known facts about even $16$-nodal quintic surfaces 
$D\subset \PP^3$ and their natural double cover $F\to D$. 
We summarize the information about $F$ in the following theorem. Note that (i) is due to Voisin
 \cite{VoisinGT} and (iii) strengthens a result of Beauville  \cite{Beamu5}. I presume that the other assertions are also more or less well known to the experts.
 
\begin{thm}\label{thm:topoDF}
Let $D\subset \PP^3$ be an even $16$-nodal quintic surface written as a quotient $D\cong F/\iota$.\smallskip

\noindent
{\rm (i)} The numerical invariants of $F$ are as follows:  
$$\chi(F,\ko_{F})=6, ~q(F)=0,~p_g(F)=5,~e(F)=62,~b_2(F)=60,
\text{ and }{\rm c}_1^2(F)=10.$$

\noindent
{\rm (ii)}  The surface $F$ is algebraically simply connected.\footnote{In the appendix by J.\ Ottem, the Fano perspective will be used to show that $F$ is also topologically simply connected.}

\noindent
{\rm (iii)}  The integral cohomology $H^2(F,\ZZ)$ is torsion free.

\noindent
{\rm (iv)}  The anti-invariant part $H^2(F,\ZZ)^-$ is a K3 Hodge structure, i.e.\  a Hodge structure 
of weight two with a one-dimensional $(2,0)$-part.

\noindent
{\rm (v)}  The intersection form on $H^2(F,\ZZ)^-$ is even and definite of signature $(2,21)$.
\end{thm}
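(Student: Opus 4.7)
The plan is to combine the double-cover formalism on $\pi\colon\tilde F\to\tilde D$ with standard invariants of a nodal quintic, handle the algebraic simple connectedness through the cubic-fourfold interpretation, and deduce the rest formally.

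For part (i), since the nodes of $D$ are crepant one has $K_{\tilde D}=\tau^*\ko_D(1)$, so $K_{\tilde D}^2=5$, $L\cdot K_{\tilde D}=0$, and $L^2=\tfrac{1}{4}\sum E_i^2=-8$. The hyperplane sequence on $\PP^3$ gives $\chi(\ko_{\tilde D})=\chi(\ko_D)=5$, $p_g(\tilde D)=4$, $q(\tilde D)=0$, and Noether yields $e(\tilde D)=55$. From $\pi_*\ko_{\tilde F}=\ko_{\tilde D}\oplus\ko_{\tilde D}(-L)$ and Riemann--Roch I obtain $\chi(\ko_{\tilde D}(-L))=1$, hence $\chi(\ko_F)=\chi(\ko_{\tilde F})=6$. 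From $\pi_*\omega_{\tilde F}=\omega_{\tilde D}\oplus\omega_{\tilde D}(L)$ one reads off $p_g(F)=4+h^0(\omega_{\tilde D}(L))$; the remaining section space has dimension $1$ (sections being the adjoints through the even set of sixteen nodes), so $p_g(F)=5$ and $q(F)=0$. Topologically, $e(\tilde F)=2e(\tilde D)-e(\bigcup E_i)=2\cdot 55-32=78$ and $K_{\tilde F}^2=2(K_{\tilde D}+L)^2=-6$; blowing down the sixteen $(-1)$-curves over the fixed locus of $\iota$ gives $e(F)=62$, $b_2(F)=60$, $K_F^2=10$.

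For (ii) I would use the identification $F\cong F_L\subset F(X)$ with $F(X)$ the simply-connected Fano fourfold of lines of a smooth cubic (Beauville--Donagi). Two routes seem viable: one exhibits $F_L$ as the intersection of $F(X)$ with a Schubert cycle on $\mathrm{Gr}(2,6)$ cut out by a sufficiently positive bundle, and applies a Fulton--Hansen/Sommese connectedness theorem to propagate $\pi_1^{\mathrm{et}}=1$ from $F(X)$ to $F_L$; the other, in the spirit of Beauville's argument in \cite{Beamu5}, shows that any nontrivial finite étale cover $F'\to F$ would descend, modulo the lift of $\iota$, to a cover of $D\subset\PP^3$ ramified only over the sixteen $A_1$-singularities, which is impossible because $\pi_1(D)=1$ (Lefschetz for the nodal quintic) and the local monodromy at an $A_1$-singularity is already realised by $\tilde F\to \tilde D$. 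Part (iii) is then formal: for proper complex varieties $\pi_1^{\mathrm{et}}$ is the profinite completion of $\pi_1^{\mathrm{top}}$, so (ii) forces $H_1(F,\ZZ)$ to have no nontrivial finite quotient, and combined with $b_1(F)=2q(F)=0$ this gives $H_1(F,\ZZ)=0$; universal coefficients then yields $\mathrm{Tors}\,H^2(F,\ZZ)=\mathrm{Ext}^1(H_1(F,\ZZ),\ZZ)=0$.

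For (iv), rationally $H^2(\tilde F,\QQ)^+=\pi^*H^2(\tilde D,\QQ)$, so the anti-invariant part has rank $b_2(\tilde F)-b_2(\tilde D)=76-53=23$; integrally, the sixteen blow-up classes on $\tilde F\to F$ lie in the $+$-part because the lifted $\iota$ acts as $-\mathrm{id}$ on each tangent space, hence trivially on the exceptional $\PP^1$, so $H^2(F,\ZZ)^-=H^2(\tilde F,\ZZ)^-$ has rank $23$ with $h^{2,0}_-=h^0(\omega_{\tilde D}(L))=1$, making it a K3-type Hodge structure. For (v), $K_F$ is ample and $\iota$-invariant, so $\iota$-equivariance of the pairing gives $K_F\cdot\alpha=\iota^*K_F\cdot\iota^*\alpha=-K_F\cdot\alpha$, hence $K_F\cdot\alpha=0$ for every $\alpha\in H^2(F,\ZZ)^-$; Wu's formula $\alpha^2\equiv c_1(F)\cdot\alpha\pmod 2$ then yields evenness, while the Hodge index theorem applied to the $K_F$-orthogonal complement forces $H^{1,1}(F,\RR)^-$ to be negative definite, giving overall signature $(2,21)$. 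The main obstacle is (ii); once it is in hand, the other parts are straightforward consequences of (i), (ii), and standard surface and Hodge theory.
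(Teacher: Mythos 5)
Your computations for (i), (iv) and the signature in (v) are sound, but two steps that carry essentially all of the difficulty are not actually proved. First, in (i) everything reduces correctly via Riemann--Roch and Noether's formula to the single claim $h^0(\tilde D,\omega_{\tilde D}\otimes\kl)=1$. Riemann--Roch and Serre duality only give $\chi(\omega_{\tilde D}\otimes\kl)=1$ and $h^2(\omega_{\tilde D}\otimes\kl)=h^0(\kl^{-1})=0$, hence $h^0(\omega_{\tilde D}\otimes\kl)=1+h^1(\kl^{-1})=1+q(F)$; your parenthetical ``the adjoints through the even set of sixteen nodes'' simply asserts the vanishing of $q(F)$, which is exactly the point the paper flags as the surprising fact, proved by Voisin \cite{VoisinGT} using the Fano description. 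Second, neither of your routes to (ii) closes. The Schubert cycle of lines meeting $L$ has codimension $3$ in the $8$-dimensional Grassmannian of lines in $\PP^5$, so the expected dimension of its intersection with the fourfold $F(X)$ is $1$, not $2$: the surface $F_L$ is an excess-dimensional degeneracy locus, and Sommese/Lefschetz-type connectedness theorems for ample complete intersections do not apply. Your second route fails because $\pi_1(D)=\{1\}$ together with knowledge of the local monodromy at the sixteen nodes does not control covers of $D'=D\setminus\{x_i\}$: the group $\pi_1(D')$ is a quotient of $(\ZZ/2\ZZ)^{\ast 16}$ and could a priori be large, so a nontrivial cover of $F$ need not ``descend'' to anything contradicting $\pi_1(D)=\{1\}$ (indeed Remark \ref{rem:Xiao}(ii) notes that even the triviality of the topological $\pi_1(F)$ is open). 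The paper's proof of (ii)--(iii) is numerical and unavoidable in some form: Beauville's result on even sets of nodes \cite{Beamu5} kills $2$-torsion in $\Pic(F)$, and either Xiao's theorem \cite{Xiao} (applicable since ${\rm c}_1^2=10<(8/3)(\chi-2)=32/3$) or the Bombieri/Noether-inequality bound $d\le 3$ on \'etale covers plus Horikawa's classification (equivalently \cite{CilML}) finishes the argument. Your deduction of (iii) from (ii) via profinite completions, $b_1=0$ and universal coefficients is correct, but it inherits the gap.

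Where your proposal genuinely differs from the paper and works is the evenness in (v): from $K_F\cdot\alpha=0$ for anti-invariant $\alpha$ and Wu's formula $\alpha^2\equiv\alpha\cdot w_2(F)\pmod 2$ you get $\alpha^2\in 2\ZZ$ without invoking Lemma \ref{lem:antiinvH}, whose proof requires a deformation argument through the Fano picture. This is appreciably more elementary. Be aware, however, that the paper's Corollary \ref{cor:evenInt} establishes the stronger statement that \emph{all} pairings $(\alpha_1.\alpha_2)$ with $\alpha_1,\alpha_2\in H^2(F,\ZZ)^-$ are even --- which is what is actually needed for the integrality of $(1/2)(~.~)$ in Theorem \ref{thm:FLF} --- and evenness of the quadratic form alone does not imply this (the hyperbolic plane is the standard counterexample). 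So your argument suffices for (v) as literally stated, but not as a replacement for the paper's route when the result is used downstream.
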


For the full cohomology one finds $H^\ast(F,\ZZ)^-=H^2(F,\ZZ)^-$, while for
a singular Kummer surfaces and the covering abelian surface we have
$H^\ast(A,\ZZ)^-=H^1(A,\ZZ)\oplus H^3(A,\ZZ)$.

\subsection{} Even $16$-nodal quintics naturally occur in the study of smooth
cubic fourfolds $X\subset\PP^5$. Recall that the Fano variety $F(X)$ of lines contained in $X$ is
a hyperk\"ahler fourfold \cite{BD}.  For a generic line $L\subset X$ the variety $F_L\subset F(X)$ of
all lines intersecting $L$ is a smooth surface with a natural involution $\iota$. Its quotient 
$D_L\coloneqq F_L/\iota$ is an even $16$-nodal quintic.
\smallskip

Due to a classical result of Beauville and Donagi \cite{BD},
there exists a Hodge isometry $(H^4(X,\ZZ)_{\rm pr}(1),-(~.~))\cong (H^2(F(X),\ZZ)_{\rm pr},q)$, where $q$ is the Beauville--Bogomolov--Fujiki pairing restricted to the Pl\"ucker primitive cohomology.
The next theorem complements this result by another Hodge isometry which refines results due to  Izadi  \cite{Izadi} and Shen \cite{Shen0,Shen2}, see  Section \ref{sec:IzadiShen} for a detailed comparison.

\begin{thm}\label{thm:FLF}
Let $L\in F(X)$ be a generic line contained in a smooth cubic fourfold $X\subset\PP^5$ and let
$F_L\subset F(X)$ be the smooth surface of all lines intersecting $L$. Then there exist natural
isometries of Hodge structures
$$(H^4(X,\ZZ)_{\rm pr}(1),-(~.~))\cong (H^2(F(X),\ZZ)_{\rm pr},q)\cong (H^2(F_L,\ZZ)_{\rm pr}^-,(1/2)(~.~)).$$
\end{thm}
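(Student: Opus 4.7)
The first isometry is the Beauville--Donagi theorem \cite{BD}, so the content is in the second. My plan is to build a rational Hodge isometry using the inclusion $j\colon F_L \hookrightarrow F(X)$ and the involution $\iota$ on $F_L$. Setting
$$\phi(\alpha) \coloneqq \tfrac{1}{2}\bigl(j^*\alpha - \iota^* j^*\alpha\bigr),$$
I obtain a morphism of rational Hodge structures $\phi\colon H^2(F(X),\QQ) \to H^2(F_L,\QQ)^-$, landing automatically in the anti-invariant part. The ranks match on primitive subspaces: $H^2(F_L,\ZZ)^-$ has rank $23$ by Theorem~\ref{thm:topoDF}(iv), and once the distinguished class defining ``primitive'' on the right is removed, both sides have rank $22$. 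The natural candidate for this distinguished class is $\phi(g)$, with $g$ the Pl\"ucker class on $F(X)$; that $\phi(g)$ indeed plays the role of an anti-invariant polarization on $F_L$ should follow from the residual-intersection identity $L + L' + \iota(L') = X \cap \overline{LL'}$, which constrains $j^*g + \iota^*j^*g$ to be proportional to the pullback of the polarization of $D_L$.

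The isometry property is the crux. Using the projection formula and $\iota$-invariance of the intersection form on $F_L$, I compute
$$\int_{F_L}\phi(\alpha)\cdot\phi(\beta) \,=\, \tfrac{1}{2}\int_{F(X)}[F_L]\cdot\alpha\cdot\beta \,-\, \tfrac{1}{2}\int_{F_L} j^*\alpha\cdot \iota^*j^*\beta,$$
where the second integral is a pairing on $F(X)$ against the class of the graph $\Gamma_\iota \subset F_L\times F_L$, viewed as a cycle on $F(X)\times F(X)$. Both cycles $[F_L] \in H^4(F(X),\QQ)$ and $[\Gamma_\iota]$ encode the residual-plane geometry through $L$. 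Using Beauville's Fujiki relation for the $K3^{[2]}$-type hyperk\"ahler $F(X)$, each integral becomes an explicit polynomial in $q(\alpha,\beta)$, $q(\alpha,g)$, $q(\beta,g)$ and $q(g,g)$; on primitive classes the $g$-cross terms vanish and the expected outcome $\int_{F_L}\phi(\alpha)\cdot\phi(\beta) = 2\,q(\alpha,\beta)$ accounts for the $1/2$ rescaling in the statement.

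The main obstacle is the explicit evaluation of $[F_L]$ and $[\Gamma_\iota]$ in Fujiki-normalized form, i.e.\ as specific $\QQ$-combinations of natural classes on $F(X)$ and $F(X)\times F(X)$ such as $g^2$ and the second Chern class $c_2(F(X))$. This is precisely where the statement refines the work of Izadi \cite{Izadi} and Shen \cite{Shen0,Shen2}, and where the geometric description of $F_L$ from Voisin \cite{VoisinGT} enters. Once the rational isometry is in hand, integrality follows from Theorem~\ref{thm:topoDF}(iii) (torsion-freeness of $H^2(F_L,\ZZ)$) together with a discriminant comparison: both lattices are even of signature $(2,20)$ after the rescaling, and their discriminant groups are determined by the Hodge structure of $X$, so a rational Hodge isometry between the primitive subspaces lifts automatically to an integral one.
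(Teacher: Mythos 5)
Your overall architecture (restrict from $F(X)$, project to the anti-invariant part, match the forms, then pass from rational to integral) is the right one, but the step you yourself flag as ``the main obstacle'' is exactly the crux, and it is not resolved by a Fujiki-type computation. The cross term $\int_{F_L}j^*\alpha\cdot\iota^*j^*\beta$ is governed by the class of $\Gamma_\iota$, and there is no Fujiki relation for arbitrary classes on $F(X)\times F(X)$; pinning down $[\Gamma_\iota]$ in the required form amounts to knowing how $\iota^*\circ j^*$ acts on $H^2(F(X),\QQ)_{\rm pr}$, which is what you are trying to compute. The paper's key input, which eliminates this term entirely, is that $\iota^*(\alpha|_{F_L})=-\alpha|_{F_L}$ for every \emph{primitive} $\alpha$ (see \ref{sec:primanti}): this is proved by a Bloch--Srinivas argument from the fact that $L'\mapsto [X\cap\overline{LL'}]-[L]$ is constant in $\CH_1(X)$, i.e.\ precisely the residual-line identity you invoke, but used on the level of cycles rather than to identify $\phi(g)$. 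Once this is known, your $\phi$ coincides with $j^*$ on primitive classes, the cross term equals $-\int j^*\alpha\cdot j^*\beta$, and the whole isometry reduces to the single evaluation $\int_{F(X)}[F_L]\cdot\alpha\cdot\beta=2q(\alpha,\beta)$, obtained from $30\,\tilde q={\rm c}_2(\kt_{F(X)})=15[F_L]-3[F(Y)]$ and the vanishing of primitive classes on the Lagrangian $F(Y)$ (see \ref{sec:isom}). Without the anti-invariance statement your computation does not close up.

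The final step is also mis-stated: a rational Hodge isometry between primitive lattices does \emph{not} automatically lift to an integral one, and your $\phi=\tfrac12(1-\iota^*)j^*$ is a priori only half-integral. What actually happens is: (a) $j^*$ itself is integral and, by the anti-invariance above, already lands in $H^2(F_L,\ZZ)^-$; (b) $(1/2)(~.~)$ is integral on $H^2(F_L,\ZZ)^-$ because that lattice is $\im(1-\iota^*)$ on integral cohomology (Lemma \ref{lem:antiinvH} and Corollary \ref{cor:evenInt}); (c) the image lies in the $g^-$-primitive part by irreducibility of $H^2(F(X),\QQ)_{\rm pr}$ for very general $X$; and (d) the resulting injective isometry between rank-$22$ lattices identifies the source with a sublattice of finite index $m$, and ${\rm discr}(q)={\rm discr}((1/2)(~.~))\cdot m^2$ with ${\rm discr}(q)=3$ squarefree forces $m=1$. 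Your appeal to ``discriminant groups determined by the Hodge structure of $X$'' presupposes what is to be shown about the right-hand lattice.
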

On the right hand side, $H^2(F_L,\ZZ)_{\rm pr}$ denotes the primitive
part with respect to the restriction of the Pl\"ucker polarization 
and $H^2(F_L,\ZZ)_{\rm pr}^-\subset H^2(F_L,\ZZ)_{\rm pr}$ is its $\iota$-anti-invariant part.
The standard intersection form $(~.~)$ on the surface $F_L$ is scaled by the factor $(1/2)$

\smallskip

The following geometric global Torelli type result is an analogue of a well-known result for cubic threefolds, for a more complete version  see Section \ref{sec:spellout}.

\begin{cor}\label{cor:GT1}
Assume $X,X'\subset \PP^5$ are two smooth cubic fourfolds and let
$L\subset X$ and $L'\subset X'$ be two generic lines. Then $X\cong X'$ if and only if
there exists an isometry of Hodge structures $$(H^2(F_L,\ZZ)^-_{\rm pr},(~.~))\cong (H^2(F_{L'},\ZZ)^-_{\rm pr},(~.~)).$$ 
\end{cor}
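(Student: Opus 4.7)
The plan is to transport the problem, via Theorem \ref{thm:FLF}, to a question about Hodge isometries of primitive fourth cohomology of the cubic fourfolds, and then invoke Voisin's global Torelli theorem for cubic fourfolds \cite{VoisinGT}.

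For the direct implication, an isomorphism $\phi\colon X\cong X'$ induces a Hodge isometry $\phi^*\colon H^4(X',\ZZ)_{\rm pr}\cong H^4(X,\ZZ)_{\rm pr}$ with respect to the standard intersection forms. Chaining this (after Tate twisting) with the two natural Hodge isometries supplied by Theorem \ref{thm:FLF} applied to $(X,L)$ and $(X',L')$ produces the desired Hodge isometry between the $\iota$-anti-invariant primitive lattices $H^2(F_L,\ZZ)^-_{\rm pr}$ and $H^2(F_{L'},\ZZ)^-_{\rm pr}$. The global factor $1/2$ and the overall sign flip appearing in Theorem \ref{thm:FLF} are inessential, because a $\ZZ$-linear map is an isometry for $(~.~)$ if and only if it is an isometry for $\lambda(~.~)$ for any nonzero scalar $\lambda$.

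Conversely, given a Hodge isometry $\psi$ of the two lattices $H^2(F_L,\ZZ)^-_{\rm pr}$ and $H^2(F_{L'},\ZZ)^-_{\rm pr}$ equipped with their standard intersection forms, the same rescaling remark upgrades $\psi$ to a Hodge isometry between the middle terms $(H^2(F(X),\ZZ)_{\rm pr},q)$ and $(H^2(F(X'),\ZZ)_{\rm pr},q)$, and hence via Theorem \ref{thm:FLF} to a Hodge isometry $H^4(X,\ZZ)_{\rm pr}\cong H^4(X',\ZZ)_{\rm pr}$ of weight four primitive cohomologies. Voisin's global Torelli theorem for cubic fourfolds then produces an isomorphism $X\cong X'$.

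The one subtlety to work through is that the precise formulation of the global Torelli theorem invoked may require additional normalization: extension to the full cohomology sending the square of the hyperplane class of $X$ to that of $X'$, or preservation of a chosen orientation on the positive part. In each such formulation the condition can be arranged by post-composing $\psi$ with standard symmetries of the lattices involved (in particular, $\pm\id$ and reflections in $(-2)$-classes already accounted for by the monodromy group), and this bookkeeping of signs is the only delicate step; the rest of the argument is a formal application of Theorem \ref{thm:FLF}.
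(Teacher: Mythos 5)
Your argument is correct and is essentially the paper's: the paper likewise chains the isometries of Theorem \ref{thm:FLF} with a global Torelli theorem, phrasing the corollary as the equivalences (iii) $\Leftrightarrow$ (iv) $\Leftrightarrow$ (v) in Section \ref{sec:spellout} and invoking the global Torelli theorem for hyperk\"ahler manifolds applied to $(F(X),g)$ rather than Voisin's theorem for the cubic itself. The two Torelli statements are interchangeable here via Beauville--Donagi, and your observation that the factor $1/2$, the overall sign, and the normalization of the isometry are the only points requiring care is precisely the bookkeeping the paper leaves implicit.
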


The assumptions on $L$ and $L'$ simply mean that the two Fano varieties
 $F_L\subset F(X)$ and $F_{L'}\subset F(X')$ of lines intersecting $L\subset X$
and $L'\subset X'$ are smooth surfaces.\smallskip

A more careful analysis of the situation allows one to upgrade  the Hodge isometry in
Theorem \ref{thm:FLF} to the level of Chow groups and rational Chow motives. 
The first assertion of the next theorem was first proved by Shen, see \cite[Thm.\ 1.5]{Shen2} and \cite[Thm.\ 4.7]{Shen0}. The second part, a direct consequence of the first, formulated in terms of transcendental motives, is due to Bolognesi and Pedrini \cite[Prop.\ 2.7]{BP}. See Section \ref{sec:ShenBP} for a comparison of the techniques.

\begin{thm}\label{thm:mainChow}\label{thm:mainmot}
The Fano correspondence induces an isomorphism
$$ \CH_0(F_L)_{\rm hom}^-\cong \CH_1(X)_{\rm hom}$$
of the homologically trivial integral Chow groups and  an isomorphism of rational Chow motives
$$\hh^2(F_L)_{\rm pr}^-\cong \hh^4(X)_{\rm pr}(1).$$
\end{thm}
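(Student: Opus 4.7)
The plan is to realize both statements through the Fano correspondence attached to the universal family of lines. Let $P_L\subset F_L\times X$ be the restriction of the universal line family, with projections $q\colon P_L\to F_L$ (a $\PP^1$-bundle) and $p\colon P_L\to X$ (generically one-to-one onto the scroll $\Sigma_L\subset X$ swept out by lines meeting $L$). I set
$$ \Phi\coloneqq p_\ast\circ q^\ast\colon \CH_0(F_L)\to \CH_1(X), $$
so that a point $[L']\in F_L$ is sent to the line class $[L']\in \CH_1(X)$.

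The first step is to show that $\Phi$ factors through the anti-invariant part. For every $L'\in F_L$ the plane $\overline{LL'}$ cuts $X$ in the cubic $L\cup L'\cup \iota(L')$, hence
$$ [L']+[\iota L']=[X\cap\overline{LL'}]-[L]\in \CH_1(X). $$
Since the planes through $L$ form a rational $\PP^3$-family, the right-hand side is a constant class in $\CH_1(X)$. Thus $\Phi(\alpha+\iota_\ast\alpha)=0$ for every $\alpha\in \CH_0(F_L)_{\rm hom}$, and $\Phi$ descends to a homomorphism $\Phi\colon \CH_0(F_L)_{\rm hom}^-\to \CH_1(X)_{\rm hom}$.

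The heart of the argument is bijectivity of this induced map. For surjectivity I would first note that the full Fano correspondence $\CH_0(F(X))_{\rm hom}\twoheadrightarrow \CH_1(X)_{\rm hom}$ is surjective (as lines cover $X$), and then reduce the problem to showing that every $0$-cycle on $F(X)$ is rationally equivalent to one supported on $F_L$; this second reduction rests on the explicit geometry of $F_L\subset F(X)$ as the codimension-two cycle cut out by the condition of meeting $L$. For injectivity, I would invoke the Hodge isometry of Theorem \ref{thm:FLF} together with a Bloch--Srinivas-type decomposition: cycles in the kernel on the transcendental part are annihilated by the corresponding map on cohomology, which is an isomorphism, hence they vanish. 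The \emph{main obstacle} is the surjectivity step, and more precisely its integral refinement: one must show that the linear combinations of lines meeting $L$ arising from the reduction span $\CH_1(X)_{\rm hom}$ over $\ZZ$, not merely $\QQ$. This is the technical heart of \cite{Shen0,Shen2} and requires a delicate intersection-theoretic computation on $F(X)$ involving the incidence classes of $F_L$ with the curves of lines through a point.

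For the motivic statement, with the integral Chow group isomorphism in hand, I would apply Manin's identity principle. The rational cycle $[P_L]\in \CH^3(F_L\times X)_\QQ$ defines a correspondence $\hh(F_L)\to \hh(X)(1)$. Composing with the $\iota$-projector $(1-\iota)/2$ on $F_L$ and with the primitive-part projectors associated to the Plücker polarisations on both sides, one obtains a morphism
$$ \hh^2(F_L)_{\rm pr}^- \to \hh^4(X)_{\rm pr}(1); $$
the vanishing $q(F_L)=0$ from Theorem \ref{thm:topoDF}(i) ensures that the anti-invariant primitive part of $\hh(F_L)$ is concentrated in degree two, so nothing is lost by projecting to $\hh^2$. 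A suitably rescaled transpose of $[P_L]$ serves as a candidate inverse, and the fact that the two compositions act as the identity reduces, by Manin's principle, to the corresponding statement for Chow groups after base change to any smooth test variety $T$, which follows formally from the absolute version already established together with the projection-formula properties of $\Phi$.
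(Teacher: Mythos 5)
There is a genuine gap, and it sits exactly where you locate ``the heart of the argument'': the bijectivity of $\Phi$ on the anti-invariant part. Your injectivity step cannot work as stated. Every class in $\CH_0(F_L)_{\rm hom}^-$ is homologically trivial by definition, so the Hodge isometry of Theorem \ref{thm:FLF} and any Bloch--Srinivas decomposition say nothing about whether such a class dies under $\Phi$; since $p_g(F_L)=5$, Mumford's theorem tells you $\CH_0(F_L)_{\rm hom}$ is enormous and not controlled by cohomology, so injectivity must be proved by exhibiting an (approximate) inverse correspondence, not by a cohomological argument. Your surjectivity step is likewise not an argument: you reduce to a statement (every $0$-cycle on $F(X)$ is rationally equivalent to one on $F_L$, integrally) that you then acknowledge you cannot prove, and which is in any case stronger than what is needed.

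What actually closes both gaps in the paper is Murre's construction transported to fourfolds: one forms $\tilde X={\rm Bl}_{{\rm Bl}_L(F_L)}(\PP(\kt_X|_L))$, on which the tangent-direction map extends to a morphism $\gamma\colon\tilde X\twoheadrightarrow X$ of degree two, and the blow-up formula identifies $\CH_1(\tilde X)_{\rm hom}$ with $\CH_0(F_L)_{\rm hom}$. Surjectivity then follows from $\gamma_\ast\gamma^\ast=2\cdot{\rm id}$ together with the divisibility of $\CH_1(X)_{\rm hom}$, which holds because $\CH_1(X)$ is generated by lines (Paranjape, Shen, Tian--Zong), so homological and algebraic equivalence agree there. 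Injectivity follows from the computation $\gamma^\ast\gamma_\ast(\alpha)=2\alpha$ for $\alpha\in\CH_0(F_L)^-$ together with the torsion-freeness of $\CH_0(F_L)_{\rm hom}$ (Roitman--Bloch, using $q(F_L)=0$). Without some substitute for this pair of identities --- which is also, in a different guise, what Shen's degree-five correspondence provides --- your plan does not yield the integral isomorphism. Your treatment of the factorization through the anti-invariant part and of the motivic statement via Manin's identity principle does match the paper and is fine, but both are downstream of the missing bijectivity.
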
 

To compare the result with the case of Kummer surfaces, observe that $\hh^2(F_L)^-$ is 
the quintic analogue of the motive $\hh^1(A)\oplus \hh^3(A)$ of an abelian surface

\subsection{} Here is an outline of the content. Section \ref{sec:Top} deals with the topological and Hodge theoretic invariants of an even $16$-nodal quintic and its natural double cover. 
It contains the proofs of the various parts of Theorem \ref{thm:topoDF}. With one minor exception in the proof of Lemma \ref{lem:antiinvH} and for convenience reasons only, we do not resort to the interpretation in terms of Fano varieties of lines on cubic fourfolds.

  Section \ref{sec:Fanopersp} discusses the natural occurrence of
double covers of even $16$-nodal quintics as  Fano varieties of lines contained in a cubic fourfold intersecting a fixed line. This interpretation, allows one to consider the double cover endowed
with the restriction of the Pl\"ucker polarization which is not canonical from the point of view of the quintic. This leads to the K3 Hodge structure of rank $22$
that appears in Theorem \ref{thm:FLF}, the proof of which is postponed to the short Section \ref{sec:proof}. 

Chow groups with integral coefficients and rational Chow motives are discussed in Section \ref{sec:Chow}. The proofs of the two parts of Theorem \ref{thm:mainmot} can be found in Sections \ref{sec:proofCH} and \ref{sec:proofMot}. The appendix contains the proof of the simple connectedness of $F_L$.

\subsection{} As should be clear from the introduction, the paper builds heavily on earlier work of Beauville, Murre, Voisin, and others. Many of the results can be found in a similar form in the literature, especially in the work of  Shen \cite{Shen0,Shen2}, cf.\  Section \ref{sec:IzadiShen}. 
 It seemed worthwhile to add further observations and refinements to their results and to present a coherent and streamlined picture of what is currently known. The paper was originally intended to become part of \cite{HuyCubics}, but it grew out of proportion and it seemed more appropriate to publish it separately.

\medskip

\noindent{\bf Acknowledgements: }
I wish to thank Rita Pardini, Peter Teichner, and Gerard van der Geer for email correspondences
and help with the literature. Comments of Fabrizio Catanese on the first version are gratefully acknowledged.
Special thanks to Alexander Kuznetsov for a long list of insightful and constructive comments and suggestions.

\section{The topology of an even $16$-nodal quintic and its double cover}\label{sec:Top}

In the following $D\subset\PP^3$ will always denote a quintic surface with $16$
nodes $x_1,\ldots,x_{16}\in D$ as its only singularities. By the adjunction formula,
its canonical bundle  is $\omega_D\cong\ko_D(1)$.

\subsection{} The minimal resolution $\tau\colon \tilde D\to D$ with canonical
bundle $\omega_{\tilde D}\cong\tau^\ast\ko_D(1)$ replaces the nodes $x_i$
by the $(-2)$-curves $E_i\subset\tilde D$.
The surface $D$  is called an \emph{even $16$-nodal quintic} if in addition $\sum E_i=0$ in $H^2(\tilde D,\ZZ/2\ZZ)$ or, equivalently, if the line bundle $\ko(\sum E_i)$ admits a  square root. Note that the line bundle $\kl$ with $\kl^2\cong\ko(\sum E_i)$ is unique, as $\Pic(\tilde D)$ is torsion free due to $\pi_1(\tilde D)=\{1\}$, see below.
 Unlike the case of $16$-nodal quartics, cf.\ \cite[Rem.\ 14.3.19]{HuyK3}, it seems unclear whether a $16$-nodal quintic is automatically even.\footnote{F.\ Catanese informed the author that there are $16$-nodal quintics which are not even and that this follows from a forthcoming paper of his.}

Similar to the well-known smoothing of ordinary double points on a singular quartic 
Kummer surface, the surface $\tilde D$ is diffeomorphic to a smooth quintic surface, cf.\  \cite[\S 2]{Beamu5} or for more general results \cite{Man}. (Ideally, one would like $\tilde D$ to be deformation equivalent to a smooth quintic, which would need the nodes to be independent \cite{BW}.)
In particular, by the Lefschetz theorem, $\tilde D$ is simply connected with
$$b_2(\tilde D)=53\text{ and } e(\tilde D)=55.$$
Hence, for the $16$-nodal quintic $D$ one has
$$b_2(D)=37\text{ and } e(D)=39.$$

\subsection{} Assume now that $D$ is an even $16$-nodal quintic and let  $\tilde\pi\colon\tilde F\to \tilde D$ be the double 
cover of $\tilde D$ ramified along
$\bigcup E_i\subset\tilde D$, i.e.\ $\tilde\pi$ is the
cyclic double cover associated with the line bundle $\kl$ and the section of $\kl^2$
defining $\bigcup E_i$. We will use the same notation for the reduced pre-images of the $(-2)$-curves  and write $E_i\subset\tilde F$. These $(-1)$-curves can also be viewed as the fixed components of the natural involution $\tilde \iota$ of $\tilde F$. Then $\omega_{\tilde F}\cong\ko(\sum E_i) \otimes \tilde\pi^\ast\omega_{\tilde D}\cong
\ko(\sum E_i) \otimes \tilde\pi^\ast\tau^\ast\ko_D(1)$. The blow-down $\tilde\tau\colon \tilde F\to F$ of the $(-1)$-curves $E_i\subset \tilde F$ fits into the commutative diagram
$$\xymatrix{\tilde F\ar[d]_-{\tilde\tau}\ar[r]^-{\tilde \pi}&\tilde D\ar[d]^-\tau\\
F\ar[r]_-\pi&D.}$$
The smooth surface $F$ with its canonical bundle
$\omega_F\cong\pi^\ast\omega_D\cong\pi^\ast\ko_D(1)$ comes with a covering involution
$\iota$ with $16$ fixed points over the nodes $x_i$ and its quotient is $F/\iota\cong D$. Throughout, we will write $x_i$ for the fixed points in $F$ and
for their images in $D$.
\smallskip

The numerical invariants of the surface $F$ are as follows:
$$\chi(F,\ko_{F})=6, ~q(F)=0,~p_g(F)=5,~e(F)=62,~b_2(F)=60,
\text{ and }{\rm c}_1^2(F)=10.$$

These facts are either classical \cite{Fano,Togliatti} or have been proved by Voisin \cite{VoisinGT},
see also \cite[Sec.\ 6.4.5]{HuyCubics} for a variant of the proof. The most surprising
fact is probably the regularity of the surface $F$, which can equivalently be phrased
as $H^1(F,\ZZ)=0$. This is in stark contrast to $H^1(A,\ZZ)\ne0$ for the natural double cover
of a $16$-nodal quartic surface. To prove this, Voisin \cite[\S 3, Lem.\ 3]{VoisinGT} uses the Fano description of $F$, cf.\  Remark \ref{rem:Xiao} below or \cite[Sec.\ 6.4.5]{HuyCubics} for a more classical argument.

The fact that $F$ is regular in particular shows $$\Pic(F)\subset H^2(F,\ZZ)\text{ and } {\rm Tors}\,\Pic(F)={\rm Tors}\, H^2(F,\ZZ).$$

\subsection{}\label{sec:decompH2}
The action of the involution $\iota$ on $F$ leads to an eigenspace decomposition
$$H^0(F,\omega_{F})=H^0(F,\omega_{F})^+\oplus H^0(F,\omega_{F})^-$$
into a four-dimensional invariant part $ H^0(D,\omega_{D})\cong H^0(F,\omega_{F})^+$ and a one-dimensional anti-invariant part $ H^0(F,\omega_{F})^-$.\smallskip

\begin{proof} Indeed, as  $\omega_{D}\cong\ko_D(1)$ for the quintic $D$, we have $h^0(D,\omega_{D})=4$. Since $F\twoheadrightarrow D$ is an \'etale double cover over the complement of the nodes  $x_i$, we know
 $\pi^*\colon H^0(D,\omega_{D})\congpf H^0(F,\omega_{F})^+$. Eventually, use $h^0(F,\omega_{F})=p_g(F)=5$. 
 \end{proof}
 
 Clearly, the composition $\pi\colon F\to D\subset\PP^3$ is the morphism associated
 with the invariant linear system $|\omega_F|^+\subset |\omega_F|$.

\subsection{}\label{sec:invpart}
The invariant and anti-invariant parts $H^2(F,\ZZ)^\pm\subset H^2(F,\ZZ)$ of the induced action of the involution $\iota$ satisfy
$$\rk\, H^2(F,\ZZ)^+=37\text{ and } \rk\, H^2(F,\ZZ)^-=23.$$
In the following, $H^2(F,\ZZ)^-$ is considered as a Hodge structure of K3 type (as we will see
torsion free). 
 Furthermore, the intersection pairing on $H^2(F,\ZZ)^-$ has signature $(2,21)$.
\smallskip

\begin{proof} In order to compute the ranks, we can work with rational cohomology.
Then $H^2(D,\QQ)\cong H^2(F,\QQ)^+$ by a
standard spectral sequence argument \cite[Prop.\ 5.2.3]{GrothToh}. 
Combining $e(D)=39$ with $H^1(D,\QQ)\cong H^1(F,\QQ)=0$ to deduce
$\rk\,H^2(F,\ZZ)^+=b_2(D)=37$, we can conclude by
$\rk\, H^2(F,\ZZ)^-=b_2(F)-37=23$. Alternatively,\footnote{Thanks to A.\ Kuznetsov for suggesting this.} one can combine the Lefschetz fixed point formula $16=2+b_2^+-b_2^-$ with $62=e(F)=2+b_2=2+b_2^++b_2^-$ to compute the 
dimensions of $H^2(F,\QQ)^\pm$.

The last assertion follows
from the orthogonal decomposition $H^2(F,\QQ)=H^2(F,\QQ)^+\oplus H^2(F,\QQ)^-$ and the Hodge index theorem on $H^2(D,\QQ)\cong H^2(F,\QQ)^+$.
\end{proof}

Note that the result is in contrast to  $H^2(A,\ZZ)=H^2(A,\ZZ)^+$  and $H^2(A,\ZZ)^-=0$ for the natural involution of an abelian surface. However, $H^*(A,\ZZ)^-=H^1(A,\ZZ)\oplus H^3(A,\ZZ)$.

\subsection{}\label{sec:no2tor}
It is known that not only $\tilde D$ but also the nodal quintic $D$ is simply connected, cf.\ \cite[Sec.\ 5.2]{Dimca}.
In particular, $H_1(D,\ZZ)=0$ and by the universal coefficient theorem $H^2(D,\ZZ)$ is torsion free.
The same holds for $F$, see also Remark \ref{rem:Xiao}.

\begin{lem}\label{lem:tfH2}
The integral cohomology $H^2(F,\ZZ)$ is torsion free.
\end{lem}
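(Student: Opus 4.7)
The plan is to prove $H_1(F,\ZZ)=0$, which by the universal coefficient theorem combined with the regularity $q(F)=0$ is equivalent to torsion-freeness of $H^2(F,\ZZ)$. First I would reduce to $\tilde F$: since $\tilde\tau\colon \tilde F \to F$ is the blow-up of $16$ smooth points, $H_1(\tilde F,\ZZ)\cong H_1(F,\ZZ)$, so it suffices to show $H_1(\tilde F,\ZZ)=0$. Throughout I use that $\tilde D$ is diffeomorphic to a smooth quintic surface, hence simply connected with torsion-free integral cohomology. The transfer formulae $\tilde\pi^*\tilde\pi_* = 1 + \tilde\iota^*$ and $\tilde\pi_*\tilde\pi^* = 2$ applied to $\tilde\pi\colon \tilde F \to \tilde D$ show that any torsion class $\alpha \in H_1(\tilde F,\ZZ)$ is annihilated by $\tilde\pi_*$ (since $H_1(\tilde D)=0$), so $\tilde\iota_*\alpha=-\alpha$, and all torsion is $\iota$-anti-invariant.

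For the $2$-primary part, I would compute $H_1(\tilde D\setminus E,\ZZ)$ via the Gysin sequence for the pair $(\tilde D,\tilde D\setminus E)$:
\[
H_2(\tilde D,\ZZ)\xrightarrow{(\alpha\cdot E_i)_i}\ZZ^{16}\longrightarrow H_1(\tilde D\setminus E,\ZZ)\longrightarrow 0.
\]
Since $E_i\cdot E_j=-2\delta_{ij}$, the image contains $2\ZZ^{16}$, and the evenness relation $\sum[E_i]=0\in H^2(\tilde D,\FF_2)$, being the unique mod-$2$ relation among the $[E_i]$, reduces the cokernel to $\ZZ/2$. The Hochschild--Serre five-term sequence for the unramified double cover $\tilde F\setminus\tilde E\to\tilde D\setminus E$ then gives $H_1(\tilde F\setminus\tilde E)_{\ZZ/2}=0$; since the meridians of the $(-1)$-curves $\tilde E_i\subset\tilde F$ are trivial in $\pi_1$ (their links are $S^3$), one has $H_1(\tilde F,\ZZ)=H_1(\tilde F\setminus\tilde E,\ZZ)$. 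Combined with $\tilde\iota_*=-1$ on torsion, the identity $(1-\tilde\iota_*)M=2M=M$ forces $M=H_1(\tilde F,\ZZ)$ to be $2$-divisible, killing the $2$-primary part.

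The main obstacle is ruling out odd-primary torsion, where anti-invariance is automatic. For an odd prime $\ell$, the Leray spectral sequence for $\tilde\pi$ with $\FF_\ell$-coefficients degenerates and the splitting $\tilde\pi_*\FF_\ell=\FF_\ell\oplus j_!\mathcal{M}_\ell$, with $\mathcal{M}_\ell$ the sign local system on $\tilde D\setminus E$, reduces the claim to $H^1(\tilde D, j_!\mathcal{M}_\ell)=0$. The local stalk computation $H^*(\RR P^3,\mathcal{M}_{\rm sign})=0$ for $\ell$ odd yields $j_!\mathcal{M}_\ell\simeq Rj_*\mathcal{M}_\ell$, whence $H^*=H^*_c$ on $\tilde D\setminus E$ with coefficients in $\mathcal{M}_\ell$, and Poincar\'e duality gives $h^1=h^3$ together with $\chi=-2h^1+h^2=e(\tilde D\setminus E)=23=\rk H^2(\tilde F,\ZZ)^-$. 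The hard part will be turning this numerical matching into an actual vanishing $h^1=0$: additional input is needed, plausibly either a constraint on the Galois structure of any hypothetical dihedral cover of $\tilde D$ branched only along $E$ (using that $\pi_1(\tilde D\setminus E)^{\rm ab}=\ZZ/2$ severely restricts such covers), or, alternatively, an appeal to the Fano interpretation $\tilde F\sim F_L\subset F(X)$ and the simple connectedness of the hyperk\"ahler fourfold $F(X)$.
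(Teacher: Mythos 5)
Your reduction to $H_1(F,\ZZ)=0$ and your treatment of the $2$-primary torsion are essentially sound: the Gysin computation $H_1(\tilde D\setminus\bigcup E_i,\ZZ)\cong\ZZ/2\ZZ$ (whose key input, that $\sum[E_i]=0$ is the \emph{only} mod-$2$ relation among the $[E_i]$, is exactly Beauville's theorem that an even subset must have $16$ or $20$ elements --- the same ingredient as the paper's first step), the five-term sequence for the unramified cover of the punctured surfaces, and the transfer argument showing $\iota_*=-1$ on torsion together kill the $2$-part. This is a reasonable topological repackaging of what the paper does directly via \cite[Lem.\ 2]{Beamu5}.

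The genuine gap is exactly where you flag it: the odd-primary torsion. Your Euler-characteristic bookkeeping for $H^1(\tilde D,j_!\mathcal{M}_\ell)$ produces no vanishing, and neither of your proposed rescues works as stated. The abelianization $\pi_1(\tilde D\setminus\bigcup E_i)^{\rm ab}\cong\ZZ/2\ZZ$ places no constraint on dihedral quotients: odd $\ell$-torsion in $H_1(F,\ZZ)$ on which $\iota_*$ acts by $-1$ corresponds precisely to metabelian quotients of $\pi_1(D')$ that are invisible to the abelianization, and ruling these out is the whole problem. Nor does simple connectedness of $F(X)$ help, since $F_L\subset F(X)$ has codimension two and its class $(1/3)(g^2-[F(Y)])$ is not a complete intersection of ample classes, so no Lefschetz-type theorem transports $\pi_1$-information to $F_L$ (Shen's argument via the Fano side exists but is substantially more involved than an appeal to $\pi_1(F(X))=\{1\}$). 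The missing idea in the paper is surface geography: by Bombieri's argument, an irreducible \'etale cover $F_0\to F$ of degree $d$ satisfies ${\rm c}_1^2(F_0)=2\chi(\ko_{F_0})-2d$, which combined with Noether's inequality $2p_g(F_0)\le {\rm c}_1^2(F_0)+4$ forces $d\le 3$; the case $d=3$ is excluded because $F_0$ would then sit on the Noether line, so by Horikawa its canonical model would be dominated by a Hirzebruch surface, contradicting the dominant map onto the quintic $D$. This is the content of the Ciliberto--Mendes Lopes theorem invoked in the paper, and it is what bounds ${\rm Tors}\,H^2(F,\ZZ)$ by $\ZZ/2\ZZ$ so that your $2$-primary argument can finish the job. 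Without some numerical input of this kind your proof does not close.
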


\begin{proof} We first show that $H^2(F,\ZZ)$ has no two-torsion, i.e.\ $H^2(F,\ZZ)[2]=0$
or, equivalently, $\Pic(F)[2]=0$. This is an immediate consequence of 
results of Beauville. According to \cite[Lem.\ 2]{Beamu5} one has
 $$\Pic(F)[2]\cong\ker\left((\ZZ/2\ZZ)^{\oplus 16}\to \Pic(\tilde D)\otimes\ZZ/2\ZZ\right)/\,\ZZ e.$$
 
Here, $e=(1,\ldots,1)$ and the map is given by the classes $\ko(E_i)\in\Pic(\tilde D)$.
Furthermore, by virtue of \cite[Prop.\ \S2]{Beamu5}, if a collection $E_i$, $i\in I$,
is even, i.e.\ $\ko(\sum_{i\in I} E_i)$ has a square root, then $|I|=16$ or $20$. Hence,
$\Pic(F)[2]=0$.
\smallskip

To conclude one can evoke a result by Ciliberto and Mendes Lopes \cite[Thm.\ A]{CilML}:
The torsion of $\Pic(X)$ of a regular, minimal surface of general type with ${\rm c}_1^2(X)=2\chi(\ko_X)-2$ is either trivial or $\ZZ/2\ZZ$. The assumptions are satisfied for
the surface $F$. For the reader's convenience and later use, we recall the key argument.
First, one shows $|\pi_1^{\rm alg}|\leq 3$ by the following argument originally due to Bombieri:
Assume $F_0\to F$ is an irreducible \'etale cover of degree $d$, then $${\rm c}_1^2(F_0)=d\cdot{\rm c}_1^2(F)=d\cdot(2\chi(\ko_F)-2)=d\cdot((2/d)\cdot \chi(\ko_{F_0})-2)=2\chi(\ko_{F_0})-2d.$$ Noether inequality $2p_g(F_0)\leq {\rm c}_1^2(F_0)+4$ together with the obvious $2\chi(\ko_{F_0})-2\leq 2 p_g(F_0)$ then
give ${\rm c}_1^2(F_0)+2(d-1)\leq {\rm c}_1^2(F_0)+4$, which proves $d\leq 3$. 

The case $d=3$
is excluded by  \cite[(1.2)]{CilML}. Alternatively, one can observe that for $d=3$ the two inequalities above are equalities. Hence, the surface $F_0$ is on the Noether line with $q(F_0)=0$, $p_g=17$, and
${\rm c}_1^2(F_0)=30$. According to results of Horikawa, see \cite[Sec.\ VII, 9]{BPV}, the minimal resolution of the canonical model of $F_0$ is then a Hirzebruch surface. As the canonical model
of $F_0$ dominates the canonical model of $F$ which in turn maps onto $D$, this results in a contradiction.
\end{proof}

\begin{lem}\label{lem:simple}
The surface $F$ is algebraically simply connected, i.e.\ $\pi_1^{\rm alg}(F)=\{1\}$.
\end{lem}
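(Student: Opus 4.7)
The plan is to deduce the algebraic simple connectivity of $F$ essentially as a bookkeeping consequence of the arguments already assembled in the proof of Lemma~\ref{lem:tfH2}. The Bombieri-style numerical argument recalled there bounds the degree of any connected \'etale cover $F_0\to F$ by $d\leq 3$, and a short group-theoretic verification shows that a profinite group in which every open normal subgroup has index at most three is itself cyclic of order $1$, $2$, or $3$. It therefore suffices to exclude the two cases $|\pi_1^{\rm alg}(F)|=2$ and $|\pi_1^{\rm alg}(F)|=3$.

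For the order-two case, I would use that a connected \'etale double cover of $F$ is classified by a nontrivial element of $\Pic(F)[2]$: the cover is the relative spectrum of $\ko_F\oplus\kl^{-1}$ with $\kl^{\otimes 2}\cong \ko_F$. But Lemma~\ref{lem:tfH2} has already established $\Pic(F)[2]=0$, via Beauville's description of the even subsets among the $16$ nodes, so this case is excluded.

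For the order-three case, the contradiction is precisely the one produced in the course of the proof of Lemma~\ref{lem:tfH2}. A connected degree $3$ \'etale cover $F_0\to F$ would saturate both the Bombieri and Noether inequalities, forcing $F_0$ to be a minimal surface of general type on the Noether line with $q(F_0)=0$, $p_g(F_0)=17$, ${\rm c}_1^2(F_0)=30$, and then Horikawa's classification would force the minimal resolution of its canonical model to be a Hirzebruch surface, contradicting the fact that the canonical model of $F_0$ dominates that of $F$ and hence maps onto $D\subset\PP^3$.

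The main difficulty lies not in the present lemma but in the exclusion of the degree $3$ case, which is the genuinely geometric input handled already in the previous lemma. Once that and the vanishing $\Pic(F)[2]=0$ are granted, the present statement follows formally: any nontrivial finite quotient of $\pi_1^{\rm alg}(F)$ would yield a connected Galois \'etale cover of degree $2$ or $3$, and both have been ruled out, so $\pi_1^{\rm alg}(F)=\{1\}$.
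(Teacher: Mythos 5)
Your proof is correct and coincides with the argument the paper itself gives as its ``alternative'' proof of this lemma: the Bombieri bound $d\leq 3$ from the proof of Lemma~\ref{lem:tfH2}, exclusion of $d=2$ via $\Pic(F)[2]=0$, and exclusion of $d=3$ via the Noether-line/Horikawa contradiction. The paper's primary argument is even shorter --- it just cites Xiao's criterion that a minimal surface of general type with ${\rm c}_1^2<(8/3)(\chi(\ko)-2)$ is algebraically simply connected, which holds here since $10<32/3$ --- but your route is exactly the self-contained one the author also records.
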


\begin{proof}
According to \cite[Cor.\ 4.4]{Xiao}, every minimal surface of general type satisfying 
${\rm c}_1^2<(8/3)(\chi(\ko)-2)$ is algebraically simply connected. As the
inequality holds for the surface $F$, one indeed has $\pi_1^{\rm alg}(F)=\{1\}$.

Alternatively, one may use the arguments in the proof of Lemma \ref{lem:tfH2}. There we
saw that irreducible \'etale covers $F_0\to F$ are of degree at most two. However, an \'etale double cover corresponds to a line bundle of order two and thus defines a non-trivial two-torsion class in 
$H^2(F,\ZZ)$, the existence of which is excluded by the previous lemma
or rather by its first step showing $\Pic(F)[2])=0$.
\end{proof}

The following immediate consequence is originally due to Shen
\cite[Lem.\ 4.5 \& Sec.\ 5]{Shen0}. The original proof is rather involved and makes heavy use of the Fano interpretation of $F$.

\begin{cor}\label{cor:simple}
The surface $F$ satisfies $H_1(F,\ZZ)=0$.\qed
\end{cor}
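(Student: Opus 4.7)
The plan is to combine the two immediately preceding lemmas with the regularity of $F$ established in Section \ref{sec:decompH2}. Since $q(F)=0$, the Hodge decomposition gives $b_1(F)=2q(F)=0$, so that the finitely generated abelian group $H_1(F,\ZZ)$ has vanishing rank and is therefore a finite torsion group. All that remains is to rule out the existence of nontrivial torsion.

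The quickest route is via the universal coefficient theorem applied in degree two,
$$H^2(F,\ZZ) \cong \Hom(H_2(F,\ZZ),\ZZ)\oplus \Ext^1(H_1(F,\ZZ),\ZZ),$$
in which the second summand identifies naturally with the torsion subgroup of $H_1(F,\ZZ)$. Since Lemma \ref{lem:tfH2} asserts that $H^2(F,\ZZ)$ is torsion free, this torsion must vanish, and together with $b_1(F)=0$ one concludes $H_1(F,\ZZ)=0$. Alternatively, one may invoke Lemma \ref{lem:simple} directly: the triviality of $\pi_1^{\rm alg}(F)$ means that every finite quotient of $\pi_1(F)$, and hence of its abelianization $H_1(F,\ZZ)$, is trivial, so that a finitely generated abelian group with trivial profinite completion must itself be zero.

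No genuine obstacle arises. Both arguments are one-line consequences of the preceding lemmas, which is precisely the point of the remark just before the corollary: once the integral refinements in Lemmas \ref{lem:tfH2} and \ref{lem:simple} are in place, the Fano-geometric detour used in Shen's original proof is no longer needed.
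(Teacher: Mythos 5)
Your proposal is correct and matches the paper's intent: the corollary carries no written proof because it is exactly the immediate consequence of Lemmas \ref{lem:tfH2} and \ref{lem:simple} that you spell out, and Remark \ref{rem:Xiao}(i) records the same universal-coefficient/profinite-completion reasoning. Both of your routes are valid and neither is circular, since $q(F)=0$ and the numerical invariants used in the lemmas are established independently beforehand.
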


\begin{remark}\label{rem:Xiao}
(i) Lemmas \ref{lem:tfH2} and \ref{lem:simple} are essentially equivalent. Clearly,
Lemma \ref{lem:simple} and its consequence Corollary \ref{cor:simple} combined
with the universal coefficient theorem imply that $H^2(F,\ZZ)$ is torsion free.
Conversely, the torsion freeness was used to show that $F$ is algebraically simply connected.
Also note that $H_1(F,\ZZ)=0$ implies
the regularity of $F$, which gives an alternative argument to Voisin's original proof \cite{VoisinGT}.
\smallskip

(ii) From the above, it is not clear whether also the topological fundamental group
$\pi_1(F)$ is trivial.\footnote{Now proved in the appendix.} 
Since $F'\coloneqq F\setminus\{x_i\}\to D'\coloneq D\setminus\{x_i\}$ is an \'etale cover, $\pi_1(F)\cong \pi_1(F')$ is the kernel of the surjection $\pi_1(D')\to\ZZ/2\ZZ$ mapping a simple loop $\gamma_i$ in $D$ around $x_i\in D$ (or, equivalently, in $\tilde D$ around $E_i\subset\tilde D$) to the generator. Here, the fundamental group $\pi_1(D')$ is viewed as a quotient of $(\ZZ/2\ZZ)^{\ast 16}$. 
Since by the above the pro-finite completion $\pi_1^{\rm alg}(F)$ of
$\pi_1(F)$ is trivial, $F$ is simply connected if and only if its fundamental group $\pi_1(F)$ is residually finite.
\end{remark}

\begin{remark} A nodal quintic $D\subset \PP^3$ with an even set of $20$ nodes gives rise
to a double cover $F\to D$ satisfying $q(F)=0$, $p_g(F)=4$, and ${\rm c}_1^2(F)=10$. In particular, the composition $F\to\PP^3$ is the canonical map. Examples were first constructed
by Beauville \cite[Prop.\ 3.6]{Beauvcan} and a construction due to Gallarati was studied in detail by Catanese \cite{Catanese}. A special example was described by van der Geer and Zagier 
\cite[\S 4]{VdGZ} as the minimal model of a Hilbert modular surface associated with $\QQ(\sqrt{21})$.

It turns out that quintics with an even set of $20$ nodes form an irreducible family \cite[Rem.\ 3.7, (2)]{Beauvcan}. However, without the condition on the $20$ nodes to be even the moduli space is reducible \cite[Prop.\ 3.8]{Catanese}.
The Hilbert modular surface is simply connected \cite[Thm.\ 6.1]{vdG}, hence for any quintic with an even set of $20$ nodes the double cover $F$ is simply connected.
\end{remark}

\subsection{}\label{sec:PicDD} The Picard group $\Pic(D')$ of the open subset
$D'=\tilde D\setminus\bigcup E_i=D\setminus\{\bar x_i\}$ is generated by 
the image of the restriction map $\Pic(D)\to \Pic(D')$ and
the two-torsion line bundle $\kl|_{D'}$ corresponding to
$F'\twoheadrightarrow D'$.
\smallskip

\begin{proof}
Indeed, the restriction map $\Pic(\tilde D)\to \Pic(D')$ is surjective and its kernel is
generated by the line bundles $\ko(E_i)$. Furthermore, $\Pic(\tilde D)$
is generated by $\Pic(D)$, the line bundles $\ko(E_i)$ and all
line bundles $\km$ with $\km^2\cong\ko(\sum_{i\in I}\ko(E_i))$ for some $I\subset\{1,\ldots,16\}$.
However, the kernel of the pull-back $\Pic(D')\to \Pic(F')$ is generated by $\kl|_{D'}$, see for example \cite[Lem.\ 3.1]{OP}. Thus, if $|I|<16$, such an $\km$ would pull-back
to a non-trivial two-torsion line bundle on $F'$. Since $\Pic(F')\cong\Pic(F)$, this would contradict Lemma \ref{lem:tfH2}.
\end{proof}

\subsection{}\label{sec:antinv}
The following assertion is analogous to the classical fact that for any \'etale double
cover $C\to \bar C$ of a smooth curve the image of the
map $1-\iota^\ast\colon H^1(C,\ZZ)\to H^1(C,\ZZ)$ is the anti-invariant part
$H^1(C,\ZZ)^-\subset H^1(C,\ZZ)$, see \cite[Sec.\ 5.3.2]{HuyCubics} for references.

\begin{lem}\label{lem:antiinvH}
The image of the map 
\begin{equation}\label{eqn:1iota}
1-\iota^*\colon H^2(F,\ZZ)\to H^2(F,\ZZ)
\end{equation} is
the anti-invariant part $H^2(F,\ZZ)^-=\{\alpha\in H^2(F,\ZZ)\mid \iota^*\alpha=-\alpha\}$.
\end{lem}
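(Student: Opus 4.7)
The inclusion $\im(1-\iota^*) \subseteq H^2(F,\ZZ)^-$ is immediate, since $\iota^*((1-\iota^*)\gamma) = -(1-\iota^*)\gamma$ for every $\gamma \in H^2(F,\ZZ)$. The reverse is equivalent to the vanishing of the cokernel $C := H^2(F,\ZZ)^-/\im(1-\iota^*)$, which is $2$-elementary: for $\alpha \in H^2(F,\ZZ)^-$ one has $(1-\iota^*)\alpha = 2\alpha$, so $2\,H^2(F,\ZZ)^- \subseteq \im(1-\iota^*)$.

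Set $L := H^2(F,\ZZ)$ and $L^\pm := H^2(F,\ZZ)^\pm$. By Lemma \ref{lem:tfH2} and Poincar\'e duality, $L$ is a torsion-free unimodular lattice; since $\iota^*$ is an isometry, $L^+ \perp L^-$, and so $L^+ \oplus L^- \subseteq L$ has finite index $|G|$, where $G := L/(L^+\oplus L^-)$. Using the rational projections $\ell \mapsto \ell^\pm := (\ell \pm \iota^*\ell)/2 \in (1/2)L^\pm$, a direct check shows that $\bar\ell \mapsto \overline{\ell^-}$ defines an injection $G \hookrightarrow (1/2)L^-/L^- \cong L^-/2L^-$, and symmetrically $G \hookrightarrow L^+/2L^+$, so $|G| \le 2^{\min(\rk L^+,\,\rk L^-)} = 2^{23}$. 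Moreover $\im(1-\iota^*) = \{2\ell^- : \ell \in L\}$, so the desired equality $\im(1-\iota^*) = L^-$ holds if and only if the injection $G \hookrightarrow L^-/2L^-$ is an isomorphism, i.e.\ $|G| = 2^{23}$. The discriminant formula applied to the orthogonal decomposition $L^+\perp L^- \subseteq L$, together with $\operatorname{disc}(L) = \pm 1$, gives
$$|\operatorname{disc}(L^+)| \cdot |\operatorname{disc}(L^-)| = |G|^2,$$
so the lemma reduces to the sharp lower bound $|\operatorname{disc}(L^-)| \ge 2^{23}$.

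The main obstacle is producing this discriminant bound, and this is where the Fano interpretation enters --- the ``minor exception'' flagged at the start of Section \ref{sec:Top}. Realizing $F$ as $F_L \subset F(X)$, the surface of lines in a smooth cubic fourfold $X$ meeting a generic line $L$, one works with $H^2(F(X),\ZZ)$, a rank-$23$ hyperk\"ahler lattice of small (even) discriminant under the Beauville--Bogomolov form (cf.\ Section \ref{sec:Fanopersp}). The anti-invariant part $L^-$ is fed by concrete classes coming from the restriction of the Pl\"ucker polarization and the incidence correspondence on $F(X)$; a direct lattice-theoretic computation exploiting the known structure of $H^2(F(X),\ZZ)$ then yields $|\operatorname{disc}(L^-)| \ge 2^{23}$, which combined with the upper bound $|G| \le 2^{23}$ forces $|G| = 2^{23}$, and hence $\im(1-\iota^*) = H^2(F,\ZZ)^-$.
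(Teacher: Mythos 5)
Your reduction is clean and correct as far as it goes: with $L=H^2(F,\ZZ)$ unimodular (Lemma \ref{lem:tfH2} plus Poincar\'e duality) and $G=L/(L^+\oplus L^-)$, the lemma is indeed equivalent to $|G|=2^{23}$, i.e.\ to $(L^-,(1/2)(\,.\,))$ being unimodular. One small wrinkle: from $|{\rm disc}(L^+)|\cdot|{\rm disc}(L^-)|=|G|^2$ and $|G|\le 2^{23}$ alone, the bound $|{\rm disc}(L^-)|\ge 2^{23}$ does not yet force $|G|=2^{23}$; you also need the standard fact that for a primitive sublattice of a unimodular lattice the two complementary discriminant groups are both isomorphic to $G$, so that $|{\rm disc}(L^-)|=|G|$.

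The genuine gap is the final step: the ``direct lattice-theoretic computation'' producing $|{\rm disc}(L^-)|\ge 2^{23}$ is the entire content of the lemma and is nowhere carried out. Nor is it clear that it can be carried out along the lines you sketch. The restriction of $H^2(F(X),\ZZ)_{\rm pr}$ only accounts for a rank-$22$ sublattice of the rank-$23$ lattice $L^-$ (of discriminant $3\cdot 2^{22}$ for the intersection form), the isomorphism type of $L^-$ itself is explicitly stated in the paper to be unknown, and the precise input you would need (Proposition \ref{prop:MainHodge}) is proved later via Corollary \ref{cor:evenInt}, which is itself deduced from the present lemma --- so this route is circular as stated. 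The paper's argument is of a different nature: it first proves the assertion for divisor classes by a geometric argument (after arranging that some fixed point $x_i$ avoids the support of $E$, the function $f$ with $(f)=E+\iota^*E$ satisfies $\iota^*f=f$ because it can be evaluated at $x_i$, whence $E=E_1-\iota^*E_1+\pi^{-1}(E_0)$ and $[E]=(1-\iota^*)[E_1]$), and then reduces the general case to the algebraic one by deformation, using that $H^2(F,\ZZ)^-$ is generated by classes that become of type $(1,1)$ on some deformation (Remark \ref{rem:deferal}). Some geometric input of this kind appears unavoidable; your proposal postpones it rather than supplies it.
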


\begin{proof} 
Clearly, the image of (\ref{eqn:1iota}) is contained in the anti-invariant part. The difficult part
is to show that every anti-invariant class $\alpha\in H^2(F,\ZZ)^-$ is of the form $\alpha=\beta-\iota^\ast\beta$ for some integral class $\beta\in H^2(F,\ZZ)$.

We first consider the class $\alpha=[E]$ of a divisor $E$ on $F$, which typically is not effective
nor irreducible. In this case, one can use the following argument, which is inspired by \cite[Lem.\ 0.3.4]{BeauvPrym}, see also \cite[Lem.\ 3.1]{OP}.  By using the projectivity of $F$, we may assume that 
at least one of the fixed points $x_1,\ldots,x_{16}\in F$ is not contained in the support
of $E$. Then 
$\iota^\ast E+E$ is a principal divisor $(f)$ for some $f\in K(F)$. Hence,
$(\iota^\ast f)=\iota^\ast(\iota^\ast E+E)=E+\iota^\ast E=(f)$ and, therefore, $\iota^\ast f=\lambda\cdot f$ for some $\lambda\in\CC^\ast$. By evaluating at one point
$x_i$ not contained in the support of $E$, one finds $\lambda=1$, i.e.\ $\iota^\ast f=f$
or, equivalently, $f=\pi^*g$ for some $g\in K(D)$. Now write $E=E_1-E_2+\pi^{-1}(E_0)$, where $E_1,E_2\subset F$
are effective divisors without common irreducible components and
 $\pi^{-1}(E_0)$ is the $\iota$-invariant part  of $E$. In other words, 
if $E_i'\subset E_i$, $i=1,2$, is an irreducible component then $\iota^*E_i'$ is not an irreducible component of $E_i$.
 
Then $\iota^\ast E+E=(\pi^*g)$ can be rewritten as 
$\iota^\ast E_1+E_1=\iota^\ast E_2+E_2+\pi^{-1}(E_0')$, where $E_0'=(g)-2E_0$.
By our assumptions on $E_1$ and $E_2$, this implies $E_1=\iota^\ast E_2$ or, equivalently, $E_2=\iota^\ast E_1$. Therefore, $E=E_1-\iota^\ast E_1+\pi^{-1}(E_0)$,
which passing to cohomology becomes $\alpha=[E]=[E_1]-\iota^\ast[E_1]+\pi^*[E_0]$. Since the last summand is contained in the invariant part, it has to be trivial, which proves
$\alpha=(1-\iota^\ast)\beta$ for $\beta=[E_1]$.

As the assertion is purely topological, it is invariant under deformations. It thus suffices to argue that $H^2(F,\ZZ)^-$ is generated by classes that are of type $(1,1)$ on some
deformation. This can be done directly or by using the Fano perspective. We defer the proof to
Remark \ref{rem:deferal}.
\end{proof}

\begin{cor}\label{cor:evenInt}
For all classes $\alpha_1,\alpha_2\in H^2(F,\ZZ)^-$ the intersection pairing
$(\alpha_1.\alpha_2)$ is even.
\end{cor}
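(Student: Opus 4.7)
My plan is to reduce the statement directly to Lemma \ref{lem:antiinvH} by an elementary bilinearity computation; there is no substantive geometric obstacle here, since all the hard work sits in Lemma \ref{lem:antiinvH}.

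First I would invoke Lemma \ref{lem:antiinvH} to write each $\alpha_i$ as $\alpha_i = \beta_i - \iota^\ast\beta_i$ for suitable integral classes $\beta_i \in H^2(F,\ZZ)$, $i=1,2$. Since $\iota$ is a biholomorphism of the smooth complex surface $F$, it is orientation-preserving and hence preserves the cup product pairing: $(\iota^\ast\gamma_1.\iota^\ast\gamma_2) = (\gamma_1.\gamma_2)$ for all $\gamma_i \in H^2(F,\ZZ)$. Combined with $(\iota^\ast)^2 = \id$, this also yields
\[
(\iota^\ast\beta_1.\beta_2) = (\iota^\ast\iota^\ast\beta_1.\iota^\ast\beta_2) = (\beta_1.\iota^\ast\beta_2).
\]

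Expanding by bilinearity then gives
\[
(\alpha_1.\alpha_2) = (\beta_1.\beta_2) - (\beta_1.\iota^\ast\beta_2) - (\iota^\ast\beta_1.\beta_2) + (\iota^\ast\beta_1.\iota^\ast\beta_2) = 2\bigl[(\beta_1.\beta_2) - (\beta_1.\iota^\ast\beta_2)\bigr],
\]
which is manifestly even. This finishes the proof.

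The only potential pitfall is making sure that the surjectivity statement in Lemma \ref{lem:antiinvH} really is onto \emph{integral} classes (and not just rational ones), since otherwise the factor of $2$ would only give a denominator-$2$ rational number rather than an even integer; but the lemma is stated with integral coefficients, so this is immediate. Everything else is just bilinearity together with the two symmetry properties of $\iota^\ast$ on the pairing.
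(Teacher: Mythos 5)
Your argument is correct and is essentially identical to the paper's own proof: both write $\alpha_i=\beta_i-\iota^\ast\beta_i$ via Lemma \ref{lem:antiinvH} and expand the pairing using that $\iota^\ast$ is an isometric involution to obtain $(\alpha_1.\alpha_2)=2(\beta_1.\beta_2)-2(\beta_1.\iota^\ast\beta_2)$. Your closing remark about integrality is a sensible sanity check, and indeed the integral surjectivity is exactly what Lemma \ref{lem:antiinvH} provides.
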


\begin{proof}
Indeed, writing  $\alpha_i=\beta_i-\iota^\ast\beta_i$, 
$i=1,2$, with $\beta_i\in H^2(F,\ZZ)$  the assertion follows
from
$$(\alpha_1.\alpha_2)=2(\beta_1.\beta_2)-2(\beta_1.\iota^\ast\beta_2),$$
which uses that $\iota^\ast$ is an isometric involution.
\end{proof}

\subsection{} The linear system of all quintics $|\ko(5)|$ is of dimension $55$. Dividing by the natural action of ${\rm PGL}(4)$ and imposing the existence of $16$ nodes reduces the dimension to $24$. For the details of this dimension count, in a more general setting, see \cite[Prop.\ 2.26]{Catanese}. 

As we will recall below, the family of even $16$-nodal quintics constructed from cubic fourfolds is also  of dimension $24$. In fact, due to results of Catanese \cite[Thm.\ 3.3\, \&\, Rem.\ 3.4]{Catanese}, the space of even $16$-nodal quintics is irreducible. Therefore, the generic even $16$-nodal quintic $D\subset \PP^3$ comes from a Fano variety of lines on a cubic fourfold, by the construction to be recalled in the next section. In particular, any additional topological information
obtained in the Fano setting, holds for any even $16$-nodal quintic and its natural double cover.

\begin{remark} The situation is similar to the case of cubic threefolds. The generic smooth plane quintic curve
is indeed of the form $D_L$. Note however that the generic \'etale double quotient
$C\to D$ of a curve of genus $11$ is not of the form $C_L\to D_L$ and the generic deformation of $C$ alone will not
have any quotient of this form. Similarly, for cubic fourfolds, the generic (infinitesimal) deformation of
$F$ will not be a double cover of a nodal quintic. Indeed, a computation using the Hirzebruch--Riemann--Roch theorem reveals that $\chi(F,\kt_F)=-40$, and, therefore,
$h^1(F,\kt_F)\geq 40$.\footnote{This comment was prompted by a question of G.\ Oberdieck.}
\end{remark}

\section{The Fano perspective}\label{sec:Fanopersp}

In the following, $X\subset\PP^5$ is a smooth cubic fourfold and $F(X)$ denotes its Fano variety of lines. According to Beauville and Donagi \cite{BD}, $F(X)$ is a hyperk\"ahler fourfold and
we will use $\sigma\in H^{2,0}(F(X))$ to denote a non-degenerate holomorphic symplectic form on it.\smallskip

For a  line $L\subset X$ let
 $$F_L\coloneqq\{~L'\mid L\cap L'\ne\emptyset~\}\subset F(X)$$ 
be  the Fano variety of all lines contained in $X$ that intersect $L\subset X$.
  
\subsection{}  From \cite{VoisinGT} we recall the following.
For a generic line $L\subset X$ in a smooth cubic fourfold $X\subset\PP^5$ the Fano variety $F_L$ is a smooth surface of general
type. Mapping $L'\in F_L$ to the intersection point of the plane $\overline{LL_1}\cong\PP^2$
with a fixed generic $\PP^3$ defines a finite morphism
\begin{equation}\label{eqn:FLDL}
\pi\colon F_L\twoheadrightarrow D_L\subset \PP^3
\end{equation} of degree two onto a quintic surface $D_L\subset \PP^3$ with exactly $16$ nodes $x_1,\ldots,x_{16}\in D_L$ as its only singularities. The covering involution $\iota\colon F_L\congpf F_L$ has exactly $16$ fixed points $x_1,\ldots,x_{16}\in F_L$ mapping bijectively onto the nodes of $D_L$. In other words, $D_L$ is an even $16$-nodal quintic and $F_L\to D_L$ is its natural double cover.\smallskip

\begin{proof}
Indeed, the surface $D_L$ is the discriminant divisor of the linear projection
${\rm Bl}_L(X)\twoheadrightarrow \PP^3$ of $X$ from the line $L\subset X$.  By construction,
the covering involution maps a line $L'\in F_L$ to the residue line
of $L\cup L'\subset X\cap \overline{LL'}$, i.e.\ $L\cup L'\cup \iota(L')=X\cap \PP^2$
for a certain plane $\PP^2\subset \PP^5$.
For generic $L$ the singularities of the discriminant surface $D_L$ are nodes
which correspond exactly to the fixed points of $\iota$. The number of these points is
computed by Porteous's formula, see the original \cite{VoisinGT} or \cite[Sec.\ 6.4.5]{HuyCubics} for more details.
\end{proof}

By definition, the line $L$ defines a point in $F_L$.\footnote{This is in contrast to the case of cubic threefolds, where
for a generic line $L$ the set all lines intersecting $L$ splits off the point corresponding to $L$.} For a generic choice, this is not a fixed point of the involution $\iota$.

\subsection{}\label{sec:NotLag} The restriction
$\sigma|_{F_L}\in H^{2,0}(F_L)$ of the holomorphic symplectic form
$\sigma\in H^{2,0}(F(X))$ is not zero, i.e.\ $F_L\subset F(X)$ is not Lagrangian.
\smallskip

\begin{proof} Indeed, due to \cite{VoisinGT} one knows that
$3\,[F_L]$ is the image of $h^3=[\PP^2]\in H^6(X,\ZZ)$ under the Fano correspondence and, therefore, $[F_L]=(1/3)(g^2-[F(Y)])$. Here, $g$ denotes the Pl\"ucker polarization and $F(Y)\subset F(X)$ is the Fano surface of a generic hyperplane section $Y=X\cap \PP^4$, see \cite{Amerik} or \cite[Sec.\ 2.5.1\,\&\,6.4.1]{HuyCubics}. Using the positivity of $g$ and the fact that $F(Y)\subset F(X)$ is Lagrangian \cite{VoisinLagr}, one concludes
$$\int_{F_L}(\sigma\bar\sigma)|_{F_L}=(1/3)\int_{F(X)}(\sigma\bar\sigma)g^2\ne0,$$
which in particular proves the assertion $\sigma|_{F_L}\ne0$.
\end{proof}

\subsection{}\label{sec:primanti}
 The restriction of any primitive class $\alpha\in H^2(F(X),\ZZ)_{\rm pr}$ to $F_L$
is anti-invariant with respect to the action of $\iota$ on $H^2(F_L,\ZZ)$, i.e.\ $\iota^*(\alpha|_{F_L})=-\alpha|_{F_L}$. 
\smallskip

\begin{proof}
The proof imitates the well-known argument for the analogous fact for cubic threefolds, cf.\  \cite[Sec.\ 5.3.1]{HuyCubics} for an account and references. First note that the assertion is invariant
under deformations. Next observe that the union of all $H^{2,0}(F(X'))\subset H^2(F(X),\CC)_{\rm pr}$, for arbitrary smooth deformations $X'\subset\PP^5$  of $X$, is Zariski dense.
Thus, it suffices to
show that $\iota^*(\sigma|_{F_L})=-\sigma|_{F_L}$ or, dually, that the composition
$$H^{2,0}(D_L)\to H^{2,0}(F_L)\to H^{4,2}(F(X))\to H^{3,1}(X)$$ is zero. This follows
from the Bloch--Srinivas principle, see for example \cite[Prop.\ 22.24]{VoisinHodge}, and the observation that the map
$D_L\to \CH_1(X)$ is constant. To see the latter, consider $L'\in F_L$ and  let
$t\coloneqq \pi(L')\in D_L$.
Then, $L\cup L'\cup \iota(L')=X\cap \PP^2$ 
and the image of the point $t\in D_L$ is the constant class $[X\cap \PP^2]-[L]\in \CH_1(X)$.
\end{proof}

\subsection{}\label{sec:crkone}
Restriction defines an injection of Hodge structures of corank one
$$H^2(F(X),\ZZ)_{\rm pr}\,\hookrightarrow H^2(F_L,\ZZ)^-.$$

\begin{proof}
By \ref{sec:primanti}, the restriction of any primitive class is indeed anti-invariant.
To prove the injectivity of the restriction map one can use a standard
deformation argument, cf.\ \cite[Rem.\ 2.5.7]{HuyCubics}.
As the assertion is purely topological, we may assume
that $X$  is very general and, in particular, that $H^2(F(X),\ZZ)_{\rm pr}$ is an
irreducible Hodge structure. Hence, the restriction map
is either trivial or injective. However, as $F_L\subset F(X)$ is not Lagrangian by \ref{sec:NotLag},
i.e.\ $\sigma|_{F_L}\ne0$, and $\sigma\in H^{2,0}(F(X))\subset
H^2(F(X),\ZZ)_{\rm pr}\otimes\CC$, it is certainly not trivial. Alternatively, one can use
\ref{sec:isom} below.

The assertion on the corank follows from $\rk\, H^2(F(X),\ZZ)=23$ and \ref{sec:invpart}.
\end{proof}

\subsection{}\label{sec:isom}
For any class $\alpha\in H^2(F(X),\ZZ)_{\rm pr}$ and its restriction
$\alpha|_{F_L}\in H^2(F_L,\ZZ)$ one has
$$2\,q(\alpha)=(\alpha|_{F_L}.\alpha|_{F_L}).$$
Here, $q$ is the Beauville--Bogomolov--Fujiki form on the hyperk\"ahler fourfold $F(X)$ and $(~.~)$ denotes the
intersection form on the surface $F_L$.
\smallskip

\begin{proof}
The result follows from a straightforward and well-known
computation of certain natural cohomology classes on $F(X)$,
see \cite[Sec.\ 6.4.1]{HuyCubics} for an account and references,
and a result of Voisin mentioned before: The natural class $\tilde q\in H^4(F(X),\QQ)$
defined by the condition that $q(\alpha)=\int_{F(X)}\alpha^2\cdot\tilde q$
satisfies $$30\,\tilde q={\rm c}_2(\kt_{F(X)})=15\,[F_L]-3\,{\rm c}_2(\ks_F)=15\,[F_L]-3\,[F(Y)],$$
where as above $F(Y)\subset F(X)$ is the  surface of all lines
contained in a generic hyperplane section $Y=X\cap H$.
Hence,  $q(\alpha)=\int_{F(X)}\tilde q\cdot \alpha^2=(1/2)\int_{F_L}\alpha|_{F_L}^2$. Here, the last equality follows from the vanishing $\alpha|_{F(Y)}=0$, and hence $\int_{F(Y)}\alpha|_{F(Y)}^2=0$,
 for all primitive classes $\alpha$, which is a consequence of $F(Y)\subset F(X)$ being Lagrangian \cite[Ex.\ 3.7]{VoisinLagr}.
 \end{proof}

\subsection{}\label{sec:gpm}  Let $g={\rm c}_1(\ko(1))\in H^2(F(X),\ZZ)$ be the class of the Pl\"ucker polarization and let $g|_{F_L}=g^++g^-$ be the decomposition of its restriction in its invariant and anti-invariant part, i.e.\ $g^\pm\in H^2(F_L,\QQ)$ with $\iota^*g^\pm=\pm g^\pm$. Then both parts are non-zero, i.e.\ $g^\pm\ne0$. 
 \smallskip
 
\begin{proof} Since $g|_{F_L}$ is ample, so is $\iota^*g|_{F_L}$. Hence,
$\iota^*g|_{F_L}\ne - g|_{F_L}$, i.e.\ $g^+\ne0$. Suppose now that $g^-=0$. Then $g$ is $\iota$-invariant. Since $\Pic(F_L)\subset H^2(F_L,\ZZ)$ torsion free, see Lemma \ref{lem:tfH2}, it means that the restriction $\ko(1)$
of the Pl\"ucker polarization  to $F_L$ is $\iota$-invariant.
Hence,  its restriction to $F_L'\coloneqq F_L\setminus\{x_i\}$ descends to a line bundle on $D_L'=D_L\setminus\{x_i\}$, cf.\
 \cite[Lem.\ 0.3.4]{BeauvPrym} or \cite[Lem.\ 3.1]{OP}. Combining this with \ref{sec:PicDD} and the fact that $\pi^*(\kl|_{D_L'})\cong\ko_{F_L'}$,  one finds a line bundle $\kk$ on $D_L$ such that $\pi^\ast(\kk|_{D_L'})\cong \ko(1)|_{F'_L}$ and, in fact, $\pi^\ast\kk\cong\ko(1)|_{F_L}$, as $\Pic(F_L)\cong \Pic(F_L')$. Thus, for $k\coloneqq{\rm c}_1(\kk)$, we have
  $\pi^\ast k= g|_{F_L}$ and, in particular,
 $(\pi^\ast k.\pi^\ast k)= (g|_{F_L}.g|_{F_L})$, which we will show to be impossible.
 
 First note that $(\pi^\ast k.\pi^\ast k)=2\cdot (k.k)$ is even, as $F_L\twoheadrightarrow D_L$ is of degree two.
  On the other hand, 
 $3\, [F_L]$ is the image of $h^3\in H^6(X,\ZZ)$ under the Fano correspondence
  and $[F_L]=(1/3)\,(g^2-[F(Y)])$, cf.\ \cite[Sec.\ 2.5.1\,\&\,6.4.1]{HuyCubics}. This implies $$\resizebox{0.95\hsize}{!}{$(g|_{F_L}.g|_{F_L})=\int_{F(X)}[F_L]\cdot g^2=\frac{1}{3}\left(\int_{F(X)}g^4-\int_{F(Y)}g^2|_{F(Y)}\right)=\frac{1}{3}(\deg(F(X))-\deg(F(Y)))=21$}.$$ Thus, $(g|_{F_L}.g|_{F_L})$ is  odd, which
  produces the desired contradiction.
  \end{proof}
  
\begin{remark} To show that $g$ is not invariant, one can alternatively use that the linear system $\ko(1)|_{F_L}\otimes\pi^\ast\ko(-1)$ defines the rational map $\xymatrix@C=18pt{F_L\ar@{..>}[r]& L}$ that maps $L'$ to its point of intersection with $L$, which is clearly not $\iota$-invariant. See \cite[\S3, Lem.\ 2]{VoisinGT}.
\end{remark}

 \begin{remark}
 (i) The arguments in the proof also show that every $\iota$-invariant line bundle on $F_L$
 is the pull-back of a line bundle on $D_L$. A priori, this is not clear, as a linearization
 of an invariant line bundle
 may act non-trivially on the fibre at one of the fixed points. 
 \smallskip
 
 (ii) Also, it is not
 clear whether the same holds true for arbitrary cohomology classes. In other words,
 is $H^2(D_L,\ZZ)\to H^2(F_L,\ZZ)^+$  surjective? One way to prove surjectivity would be to show that $H^2(F_L,\ZZ)^+$  is generated by classes that become algebraic on some deformation of $D_L$ as an even $16$-nodal quintic. Note that in the analogous
 situation for cubic threefolds the map is indeed not surjective. Indeed, for
 an \'etale  double cover $C\twoheadrightarrow\bar C$ of smooth curves
 the map $H^1(\bar C,\ZZ)\to H^1(C,\ZZ)^+$ has a cokernel of order two, see \cite[Sec.\ 5.3.2]{HuyCubics} for references.
 \end{remark}

 Consider the torsion free lattice  $$H^2(F_L,\ZZ)^-_{\rm pr}\subset H^2(F_L,\ZZ)$$
of all classes that  are primitive with respect to the restriction $g|_{F_L}$ of the Pl\"ucker polarization and anti-invariant with
 respect to the involution $\iota$. 
 
 \begin{cor}
 The lattice $H^2(F_L,\ZZ)^-_{\rm pr}$ is of rank $22$ and signature $(2,20)$. It is
 naturally endowed with a Hodge structure of K3 type.
 \end{cor}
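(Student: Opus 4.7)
The plan is to identify the lattice $H^2(F_L,\ZZ)^-_{\rm pr}$ with the orthogonal complement of the rational class $g^-$ inside $H^2(F_L,\ZZ)^-$, and then read off the three assertions from what has already been established in \ref{sec:invpart} and \ref{sec:gpm}. First I would observe that the eigenspace decomposition $H^2(F_L,\QQ)=H^2(F_L,\QQ)^+\oplus H^2(F_L,\QQ)^-$ is orthogonal for the intersection form, since $\iota$ is an isometric involution. Consequently, for $\alpha\in H^2(F_L,\ZZ)^-$ one has $(\alpha.g|_{F_L})=(\alpha.g^+)+(\alpha.g^-)=(\alpha.g^-)$, so that
$$H^2(F_L,\ZZ)^-_{\rm pr}=\{\alpha\in H^2(F_L,\ZZ)^-\mid (\alpha.g^-)=0\}.$$

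To compute the rank, I would invoke \ref{sec:gpm} for $g^-\ne 0$ together with the non-degeneracy of the intersection form on $H^2(F_L,\QQ)^-$ (which follows at once from the orthogonal direct sum decomposition and Poincar\'e duality on the whole of $H^2$). This makes $\alpha\mapsto(\alpha.g^-)$ a non-zero linear functional on $H^2(F_L,\QQ)^-$, so its kernel has rank $23-1=22$.

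The main point is the signature, and this is where I expect the only real work. I would argue that $g^-$ lies in the negative part of the form on $H^2(F_L,\RR)^-$. The class $g|_{F_L}$ is of type $(1,1)$, hence so are its $\iota$-equivariant components $g^{\pm}$. Since $g|_{F_L}$ and $\iota^\ast g|_{F_L}$ are both ample, the sum $2g^+=g|_{F_L}+\iota^\ast g|_{F_L}$ represents an ample class, and in particular $(g^+.g^+)>0$. By the Hodge index theorem, the form on $H^{1,1}(F_L)_\RR$ has signature $(1,h^{1,1}-1)$, and its unique positive direction is already carried by $g^+\in H^{1,1}(F_L)_\RR^+$. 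Therefore $H^{1,1}(F_L)_\RR^-$ is negative definite, which forces $(g^-.g^-)<0$; removing a negative line from the signature $(2,21)$ of \ref{sec:invpart} then gives the claimed signature $(2,20)$.

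For the K3-type structure I would simply note that $g^-$ is of type $(1,1)$, so taking its orthogonal complement defines a sub-Hodge structure whose $(2,0)$-part coincides with the full one-dimensional space $H^{2,0}(F_L)^-$ from \ref{sec:decompH2}. The main obstacle in this plan is the signature step, but it is entirely formal once one recognises that the existence of an $\iota$-invariant ample direction $g^+$ places the single positive $(1,1)$-direction in the invariant subspace, leaving no positive direction available in $H^{1,1}_-$.
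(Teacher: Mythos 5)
Your proposal is correct and follows essentially the same route as the paper: identify $H^2(F_L,\ZZ)^-_{\rm pr}$ as the kernel of $(\,\cdot\,.g^-)$ on $H^2(F_L,\ZZ)^-$ using the orthogonality of the $\iota$-eigenspaces, get corank one from $g^-\ne 0$ (\ref{sec:gpm}), deduce $(g^-.g^-)<0$ from the Hodge index theorem to pass from signature $(2,21)$ to $(2,20)$, and read off the K3 type from \ref{sec:decompH2}. Your spelled-out justification that the unique positive $(1,1)$-direction sits in the invariant part (via the ample class $g|_{F_L}+\iota^\ast g|_{F_L}$) is exactly the content the paper compresses into the phrase ``by the Hodge index theorem, $(g^-.g^-)<0$.''
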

 
 \begin{proof} As $\rk\, H^2(F_L,\ZZ)^-=23$ by  \ref{sec:invpart}, it suffices to show that $H^2(F_L,\ZZ)^-_{\rm pr}\subset H^2(F_L,\ZZ)^-$ is a proper
 sub-lattice, i.e.\ the linear form $(g.~)$ on  $H^2(F_L,\ZZ)^-$ is non-zero. As the two sublattices $H^2(F_L,\ZZ)^-$ and $H^2(F_L,\ZZ)^+$ are orthogonal with respect to $(~.~)$, this follows from $g^-\ne0$  proved above. By the Hodge index theorem, $(g^-.g^-)<0$, which implies the claimed signature. That the Hodge structure is of K3 type follows from \ref{sec:decompH2}.
\end{proof}

\section{Hodge isometries and global Torelli}\label{sec:proof}
The goal of this section is to prove Theorem \ref{thm:FLF}.

\subsection{} As before, we consider a smooth cubic fourfold $X\subset\PP^5$ and a generic line $L\subset X$.
\begin{prop}\label{prop:MainHodge}
The restriction map induces an isometry of Hodge structures of K3 type
$$(H^2(F(X),\ZZ)_{\rm pr},q)\cong (H^2(F_L,\ZZ)_{\rm pr}^-,(1/2)(~.~)).$$
\end{prop}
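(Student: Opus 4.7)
The plan is to assemble the results of subsections \ref{sec:primanti}--\ref{sec:gpm} into a single statement, and then to upgrade the resulting finite-index isometric embedding to an equality of integral lattices.

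First, by \ref{sec:crkone} the restriction map $\phi\coloneqq({\cdot})|_{F_L}$ gives an injection of integral Hodge structures $H^2(F(X),\ZZ)_{\rm pr}\hookrightarrow H^2(F_L,\ZZ)^-$, and \ref{sec:isom}, polarized, yields $2\,q(\alpha,\beta)=(\phi(\alpha).\phi(\beta))$ for all primitive $\alpha,\beta$. So $\phi$ is an isometric embedding of $(H^2(F(X),\ZZ)_{\rm pr},q)$ into $(H^2(F_L,\ZZ)^-,(1/2)(~.~))$.

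Next I verify that the image is Pl\"ucker-primitive, i.e.\ $(\phi(\alpha).g|_{F_L})=0$ for every primitive $\alpha$. Writing $(\phi(\alpha).g|_{F_L})=\int_{F(X)}[F_L]\cdot\alpha\cdot g$ and substituting $[F_L]=(1/3)(g^2-[F(Y)])$, the Fujiki relation for hyperk\"ahler fourfolds of $K3^{[2]}$-type gives $\int_{F(X)}g^3\alpha=3\,q(g)\,q(g,\alpha)=0$, so $\int_{F_L}g\alpha=-(1/3)\int_{F(Y)}g\alpha$; on the other hand, from $\tilde q=(1/2)[F_L]-(1/10)[F(Y)]$ one has $0=q(g,\alpha)=\int_{F(X)}\tilde q\cdot g\cdot\alpha=(1/2)\int_{F_L}g\alpha-(1/10)\int_{F(Y)}g\alpha$. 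These two linear relations force $\int_{F_L}g\alpha=0$. Since $H^2(F_L,\ZZ)_{\rm pr}^-$ has rank $22$ by the final corollary of Section \ref{sec:Fanopersp}, matching the rank of the domain, $\phi$ is a full-rank isometric embedding into $(H^2(F_L,\ZZ)_{\rm pr}^-,(1/2)(~.~))$.

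The main obstacle is to promote this to an isomorphism of integral lattices, i.e.\ to show the finite index is one. My plan is a discriminant comparison. On the source, the Beauville--Donagi isometry $(H^2(F(X),\ZZ)_{\rm pr},q)\cong(H^4(X,\ZZ)_{\rm pr}(1),-(~.~))$ identifies the discriminant with the well-known invariant $3$ of the primitive middle cohomology of a smooth cubic fourfold. For the target I would compute the discriminant of $(H^2(F_L,\ZZ)_{\rm pr}^-,(1/2)(~.~))$ directly, using that $H^2(F_L,\ZZ)$ is unimodular by Poincar\'e duality and torsion-free by Theorem \ref{thm:topoDF}(iii), and then controlling the lattice-theoretic glue between $H^2(F_L,\ZZ)^+$ and $H^2(F_L,\ZZ)^-$ under $\iota^*$ via the explicit splitting $g|_{F_L}=g^++g^-$ of \ref{sec:gpm}, to recover the same discriminant $3$. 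Matching the two values forces the finite index to be $1$, completing the proof. A backup strategy, should this direct lattice bookkeeping prove awkward, is to specialize to a very general $X$ for which $H^2(F(X),\ZZ)_{\rm pr}$ is an irreducible Hodge structure of K3 type, and to argue saturation of the image in $H^2(F_L,\ZZ)_{\rm pr}^-$ from the absence of proper rational sub-Hodge-structures.
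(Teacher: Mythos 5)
Your first two steps are sound, and your primitivity computation via the Fujiki relation together with $\tilde q=(1/2)[F_L]-(1/10)[F(Y)]$ is a valid, pleasantly direct alternative to the paper's argument (which instead projects to $\QQ\cdot g^-$ and invokes irreducibility of $H^2(F(X),\QQ)_{\rm pr}$ for very general $X$). The two linear relations you derive do force $\int_{F_L}g\,\alpha|_{F_L}=0$, so you correctly obtain a finite-index isometric embedding of $(H^2(F(X),\ZZ)_{\rm pr},q)$ into the rank-$22$ lattice $(H^2(F_L,\ZZ)_{\rm pr}^-,(1/2)(~.~))$.

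The gap is in the last step. Your primary plan --- computing ${\rm discr}(H^2(F_L,\ZZ)_{\rm pr}^-,(1/2)(~.~))=3$ directly from unimodularity of $H^2(F_L,\ZZ)$ and the glue between the $\iota^*$-eigenlattices --- is not carried out, and it is essentially as hard as the proposition itself: the gluing condition only tells you that the discriminant groups of $H^2(F_L,\ZZ)^+$ and $H^2(F_L,\ZZ)^-$ are isomorphic, so you would still have to compute one of them, and you would then have to control the divisibility of $g^-$ to pass to the primitive part. (The paper explicitly remarks that the isomorphism type of $H^2(F_L,\ZZ)^-$ is unknown.) Your backup strategy is a non sequitur: since the embedding already has full rank $22$, the image and its saturation span the same rational Hodge structure, so irreducibility cannot distinguish them and says nothing about the index $m$. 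The missing observation, which is how the paper concludes, is that you never need the target discriminant: by Corollary \ref{cor:evenInt} the form $(1/2)(~.~)$ is \emph{integral} on $H^2(F_L,\ZZ)^-$, hence its discriminant $d$ on the nondegenerate rank-$22$ lattice $H^2(F_L,\ZZ)_{\rm pr}^-$ is a nonzero integer, and the relation $3={\rm discr}(q)=d\cdot m^2$ forces $m^2\mid 3$, i.e.\ $m=1$, because $3$ is squarefree.
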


\begin{proof} 
By virtue of  \ref{sec:crkone} and \ref{sec:isom}, restriction embeds
the Hodge structure $H^2(F(X),\ZZ)_{\rm pr}$ endowed with the Beauville--Bogomolov--Fujiki form $q$
isometrically into $H^2(F_L,\ZZ)^-$ viewed with the scaled intersection form $(1/2)(~.~)$. The latter is
integral on $H^2(F_L,\ZZ)^-$ due to Corollary \ref{cor:evenInt}.

This leads to a morphism of rational Hodge structures
$$\xymatrix{H^2(F(X),\QQ)_{\rm pr}\ar@{^(->}[r]&H^2(F_L,\QQ)^-=H^2(F_L,\QQ)^-_{\rm pr}\oplus\QQ\cdot g^-\ar@{->>}[r]&\QQ\cdot g^-,}$$
which  has to be trivial due to the irreducibility of the Hodge structure $H^2(F(X),\QQ)_{\rm pr}$ 
for the very general $X$. Hence, restriction defines an embedding of Hodge structures 
\begin{equation}\label{eqn:HodgeIsoso}
H^2(F(X),\ZZ)_{\rm pr}\,\hookrightarrow H^2(F_L,\ZZ)^-_{\rm pr},
\end{equation}
which is isometric with respect to the two symmetric forms $q$ and $(1/2)(~.~)$. Note that both
sides are of rank $22$. Thus, restriction identifies   $H^2(F(X),\ZZ)_{\rm pr}$
with a  sub-lattice  of  $H^2(F_L,\ZZ)_{\rm pr}^-$ of  finite index $m<\infty$.
By a standard fact in lattice theory, the discriminants of the two lattices are related
by the formula  ${\rm discr}(q)={\rm discr}((1/2)(~.~))\cdot m^2$. 

Since 
by \cite{BD} there exists an isometry (up to a global sign) $H^2(F(X),\ZZ)_{\rm pr}\cong H^4(X,\ZZ)_{\rm pr}$, we know that ${\rm discr}(q)=3$. This suffices to conclude  that $m=1$, i.e.\ (\ref{eqn:HodgeIsoso}) is bijective
\end{proof}

Together with the result of Beauville and Donagi \cite{BD} this proves Theorem  \ref{thm:FLF}.\qed

\begin{remark}
(i) The isomorphism types of the two lattices $H^2(F(X),\ZZ)$ and $H^2(F(X),\ZZ)_{\rm pr}$ are known. The latter is the even lattice $E_8(-1)^{\oplus 2}\oplus U^{\oplus 2}\oplus A_2(-1)$, which therefore also describes $H^2(F_L,\ZZ)^-_{\rm pr}$. 

However, it is not clear to us how to determine the isomorphism type of the lattice $H^2(F_L,\ZZ)^-$, which according to \ref{sec:invpart} has signature $(2,21)$. It is tempting to conjecture the existence of an isometry
(up to a global sign) $H^4(X,\ZZ)\cong H^2(F_L,\ZZ)^-$, but I have no further evidence for it.\smallskip

(ii) A.\ Kuznetsov suggested that Proposition  \ref{prop:MainHodge} could possibly be seen as a consequence of a
result for general conic fibrations $\phi\colon\tilde X\to\PP^3$. Indeed, it seems feasible that
one can establish a direct link between the Hodge structures  $H^4(\tilde X,\ZZ)$ and $H^2(F,\ZZ)^-$,
where $F$ is the natural double cover of the discriminant surface $D\subset\PP^3$ of $\phi$. 
Due to the dependence on the Pl\"ucker polarization, incorporating the quadratic forms and defining the primitive part of $H^2(F_L,\ZZ)^-$ seem less obvious. In any case, in  the context of a cubic fourfold $X$ the approach via its
Fano variety $F(X)$ has the advantage  
that the Hodge isometry constructed above is compatible with the inclusion $F=F_L\subset F(X)$.

 %
 %
\end{remark}

\begin{remark}\label{rem:deferal}
General deformation theory for hyperk\"ahler manifolds implies that the classes in $H^2(F(X),\ZZ)_{\rm pr}$ that are of type $(1,1)$ for some deformation of $X$
generate the full primitive cohomology. Hence, the same is true for $H^2(F_L,\ZZ)^-$ and then
for $H^2(F,\ZZ)^-$ of the natural cover of an arbitrary even $16$-nodal quintic. This was used in the proof of Lemma \ref{lem:antiinvH}.
\end{remark}

\begin{remark}
According to Hassett \cite{Hassett}, there exists a countable union of divisors in the moduli space of cubic fourfolds
for which $H^2(F(X),\ZZ)_{\rm pr}$ contains the Hodge structure $H^2(S,\ZZ)_{\rm pr}$ of a polarized K3
surface. The correspondence is known to be algebraic \cite{AT}. Thus, along these divisors one finds algebraic
correspondences between a K3 surface $S$ and the surface of general type $F_L$. Can those
be realized geometrically?
\end{remark}

\subsection{}\label{sec:spellout}
It may be worthwhile to spell out the various Hodge theoretic conditions and their geometric consequences. Let  $L\subset X$ and  $L'\subset X'$ be as above and let us consider 
the following statements:


\begin{enumerate}
\item[{\rm (i)}] There exists an isomorphism $F_L\cong F_{L'}$ compatible with the natural involutions $\iota$ and $\iota'$.\vskip0.3cm
\item[{\rm (ii)}] There exists an equivariant isometry  
$(H^2(F_L,\ZZ)_{\rm pr},(~.~),\iota^\ast)\cong (H^2(F_{L'},\ZZ)_{\rm pr},(~.~),\iota'^\ast)$ of Hodge structures.\vskip0.3cm
\item[{\rm (iii)}] There exists an isometry of Hodge structures 
$(H^2(F_L,\ZZ)^-_{\rm pr},(~.~))\cong (H^2(F_{L'},\ZZ)^-_{\rm pr},(~.~))$.\vskip0.3cm
\item[{\rm (iv)}] There exists an isomorphism of polarized varieties
$(F(X),g)\cong (F(X'),g')$.\vskip0.3cm
\item[{\rm (v)}]  There exists an isomorphism $X\cong X'$.
\end{enumerate}
\smallskip

Then the following implications hold
$$ \text{ (i) } \Rightarrow\text{ (ii) } \Rightarrow\text{ (iii) }\Leftrightarrow\text{  (iv) } \Leftrightarrow\text{ (v).}$$

The equivalence of (iii) and (iv) follows from Theorem \ref{thm:FLF} and the global Torelli theorem
for hyperk\"ahler manifolds. All other implications are either obvious or well known. The first implication is not an equivalence and neither should be the second.

Corollary \ref{cor:GT1} is the combination (iii) $\Leftrightarrow$ (iv) $\Leftrightarrow$ (v). It is the analogue of the celebrated result of Clemens--Griffiths and Tyurin combined with Mumford's work on Prym varieties: For two smooth cubic threefolds $Y$ and $Y'$, one has
$$Y\cong Y'~\Leftrightarrow ~ {\rm Prym}(C_L/D_L)\cong{\rm Prym}(C_{L'}/D_{L'})$$
with an isomorphism of polarized abelian varieties on the right hand side. Here, $L\subset Y$ and $L'\subset Y'$ are generic lines and $C_L\to D_L$ and $C_{L'}\to D_{L'}$ are the analogues of $F_L\to D_L$ and $F_{L'}\to D_{L'}$, see \cite[Ch.\ 5]{HuyCubics} for references.

\subsection{}\label{sec:IzadiShen}
Theorem \ref{thm:FLF} can be seen as a stronger and more precise version
(in dimension four) of  a result of Izadi \cite[Thm.\ 3]{Izadi}, which asserts the existence of an exact
sequence $$0\to H^2(D_L,\ZZ)_{\rm pr}\to H^2(F_L,\ZZ)_{\rm pr}\to H^4(X,\ZZ)_{\rm pr}\to 0,$$
where the ample class $g|_{F_L}+{\rm c}_1(\pi^\ast\ko(1))$  is used to define the primitive part.
However, the proof of the surjectivity is incomplete. It uses the surjectivity of $H^2(F(X),\ZZ)\twoheadrightarrow H^2(F(X),\ZZ)_{\rm pr}$ claimed in \cite[Sec.\ 4]{Izadi}, which only exists with coefficients in $\QQ$ or after some suitable localization, and, no argument is given for the surjectivity in the proof of \cite[Lem.\ 5.12]{Izadi}.
The problem is similar to showing that $H^2(F(X),\ZZ)_{\rm pr}\,\hookrightarrow H^2(F_L,\ZZ)_{\rm pr}^-$ is bijective, see the proof of Proposition
\ref{prop:MainHodge}. Furthermore, the claim that $H^2(D_L,\ZZ)$ is the invariant part of $H^2(F_L,\ZZ)$, which again does hold for coefficients in $\QQ$ and for algebraic classes, see \ref{sec:invpart}, \ref{sec:gpm}, is not adequately addressed and no proof is given for the torsion freeness of $H^2(F_L,\ZZ)$.
\smallskip

 Shen's result \cite[Thm.\ 1.5 (2)\,\&\,Rem.\ 5.10]{Shen2} for the Fano surface of lines meeting a fixed general rational curve of degree at least two is similar to the above proposition. The proof of the surjectivity there relies  on degeneration techniques developed in \cite{Shimada}. Also the image of $1-\iota^\ast$ is used instead of $H^2(F_L,\ZZ)_{\rm pr}^-$, which, however, by virtue
of \ref{sec:antinv} eventually amounts to the same. Subsequently, Shen considered the case
of lines. The results \cite[Thm.\ 4.7 \& Cor.\ 4.8]{Shen0} come closest to Proposition
\ref{prop:MainHodge} and Theorem \ref{thm:FLF},
although Shen defines the primitive anti-invariant part in terms of two classes in $H^2(F_L,\ZZ)$, the Pl\"ucker polarization and the class of the fibre of the natural projection $\xymatrix@C=18pt{F_L\ar@{..>}[r]&L}$. Also, the convention for the pairings  $q$ and $(1/2)(~.~)$ on the two sides are different. In \ref{sec:ShenBP} we give a brief comparison of  the techniques.

\section{Chow groups and Chow motives}\label{sec:Chow}
The goal of this section is to `lift' the Hodge isometry constructed above to the level of integral Chow groups and rational Chow motives. As a first step, one needs to define properly the analogue of the Prym variety ${\rm Prym}(C_L/D_L)$ as a subgroup of the Chow group $\CH_0(F_L)$.

\subsection{}\label{sec:FDgen}
Let us first consider the general situation of an even $16$-nodal quintic $D\subset\PP^3$
and the natural double cover $\pi\colon F\twoheadrightarrow D$ with the covering involution
$\iota$.

Let $\CH(F)_{\rm hom}=\CH_0(F)_{\rm hom}\subset\CH_0(F)$ be  the homologically trivial part of the Chow group (of $0$-cycles) on the surface $F$. We consider it with the action induced by the involution $$\iota^\ast\colon \CH_0(F)_{\rm hom}\congpf
 \CH_0(F)_{\rm hom}$$ 
 and define the two subgroups $$\CH_0(F)_{\rm hom}^\pm\coloneqq\{~\alpha\mid\iota^*\alpha=\pm\alpha ~\}.$$

The groups  $\CH_0(D)_{\rm hom}$ and $\CH_0(F)_{\rm hom}$ are divisible and, 
since the Albanese of the regular surfaces $D$ and $F$ are trivial, 
also torsion free \cite{Bl,Roi}. In particular, this allows one to write any class $\alpha\in \CH_0(F)_{\rm hom}$ as $$\alpha=\alpha^++\alpha^-$$
with integral $\alpha^\pm\in \CH_0(F)_{\rm hom}^\pm$. Explicitly, $\alpha^\pm\coloneqq (1/2)(\alpha\pm\iota^*\alpha)$. In other words,
\begin{equation}\label{eqn:CHpm}
\CH_0(F)_{\rm hom}=\CH_0(F)_{\rm hom}^+\oplus \CH_0(F)_{\rm hom}^-.
\end{equation}

The group $\CH_0(F)_{\rm hom}^+$ can be identified with $\CH_0(D)_{\rm hom}$
via $$\pi^\ast\colon \CH_0(D)_{\rm hom}\congpf\CH_0(F)_{\rm hom}^+.$$
Indeed, $\pi^\ast$ is injective, as $\pi_\ast\circ\pi^\ast=2\cdot{\rm id}$ and $\CH_0(D)_{\rm hom}$
is torsion free. The surjectivity follows from the well-known statement for Chow groups
with coefficients in $\QQ$, see \cite[Exa.\ 1.7.6]{Fulton}, and the divisibility of the Chow group.
Note that the singularities of the surface $D$ do not cause trouble, for
$\CH_0(D)_{\rm hom}\cong \CH_0(\tilde D)_{\rm hom}$ by \cite[Cor.\ 9.8]{Srini}. 
\smallskip

We are more interested in the anti-invariant part which admits several alternative descriptions.

\begin{lem}\label{lem:ChF-}
Projection  defines an isomorphism
$$\CH_0(F)_{\rm hom}^-\cong \CH_0(F)_{\rm hom}/ \CH_0(F)_{\rm hom}^+\cong 
\CH_0(F)_{\rm hom}/ \CH_0(D)_{\rm hom}.$$
Furthermore,
$$\CH_0(F)_{\rm hom}^-=\im\left(1-\iota^\ast\colon  \CH_0(F)_{\rm hom}\to  \CH_0(F)_{\rm hom}\right)$$
and 
$$\CH_0(F)_{\rm hom}^-=\ker\left(\pi_\ast\colon\CH_0(F)_{\rm hom}\to \CH_0(D)_{\rm hom}\right).$$
In particular, $\CH_0(F)_{\rm hom}^-$ is generated by classes of the form
$[t]-[\iota(t)]\in \CH_0(F)$.

\end{lem}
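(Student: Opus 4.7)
The plan is to deduce everything from the direct sum decomposition (\ref{eqn:CHpm}), the previously established identification $\pi^\ast\colon\CH_0(D)_{\rm hom}\congpf\CH_0(F)_{\rm hom}^+$, and the fact, recalled in \ref{sec:FDgen}, that $\CH_0(F)_{\rm hom}$ and $\CH_0(D)_{\rm hom}$ are divisible and torsion free.

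First I would observe that the splitting (\ref{eqn:CHpm}) gives tautologically the composite isomorphism
$$\CH_0(F)_{\rm hom}^-\congpf \CH_0(F)_{\rm hom}/\CH_0(F)_{\rm hom}^+,$$
and replacing the denominator via $\pi^\ast\colon\CH_0(D)_{\rm hom}\congpf\CH_0(F)_{\rm hom}^+$ yields the second displayed isomorphism.

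For the image description, note that $\iota^\ast(1-\iota^\ast)=-(1-\iota^\ast)$, so the inclusion $\im(1-\iota^\ast)\subset \CH_0(F)_{\rm hom}^-$ is automatic. For the converse, given $\alpha\in\CH_0(F)_{\rm hom}^-$ I would use divisibility and torsion-freeness to choose the unique $\beta\in \CH_0(F)_{\rm hom}$ with $2\beta=\alpha$; then
$$(1-\iota^\ast)\beta=\beta-\iota^\ast\beta=\tfrac{1}{2}\alpha-\tfrac{1}{2}\iota^\ast\alpha=\tfrac{1}{2}\alpha+\tfrac{1}{2}\alpha=\alpha,$$
so $\alpha\in\im(1-\iota^\ast)$.

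For the kernel description I would use that $\pi\circ\iota=\pi$ and the projection formula $\pi^\ast\pi_\ast=1+\iota^\ast$. If $\alpha\in\CH_0(F)_{\rm hom}^-$, then $\pi_\ast\alpha=\pi_\ast\iota^\ast\alpha=-\pi_\ast\alpha$, so $2\pi_\ast\alpha=0$ and hence $\pi_\ast\alpha=0$ by torsion-freeness of $\CH_0(D)_{\rm hom}$. Conversely, if $\pi_\ast\alpha=0$ then $0=\pi^\ast\pi_\ast\alpha=(1+\iota^\ast)\alpha=2\alpha^+$; torsion-freeness forces $\alpha^+=0$, so $\alpha=\alpha^-\in\CH_0(F)_{\rm hom}^-$. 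Finally, the generation statement is immediate from $\CH_0(F)_{\rm hom}^-=\im(1-\iota^\ast)$: any class in $\CH_0(F)_{\rm hom}$ is a finite integral combination $\sum n_i[t_i]$, and $(1-\iota^\ast)$ sends it to $\sum n_i([t_i]-[\iota(t_i)])$. There is no genuine obstacle here; the only point requiring care is the consistent use of divisibility (to extract the half $\beta=\alpha/2$) together with torsion-freeness (to turn the $2$-torsion statements into actual vanishings), both of which are guaranteed by the Bloch--Roitman results cited in \ref{sec:FDgen}.
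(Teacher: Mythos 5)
Your proof is correct and follows essentially the same route as the paper: everything is reduced to the splitting (\ref{eqn:CHpm}) together with the divisibility and torsion-freeness of the Chow groups. The only cosmetic difference is in the converse of the kernel description, where you apply $\pi^\ast$ and the identity $\pi^\ast\pi_\ast=1+\iota^\ast$, while the paper applies $\pi_\ast$ directly to $\alpha=\pi^\ast\beta+\alpha^-$ and uses $\pi_\ast\pi^\ast=2$; both amount to the same computation.
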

\begin{proof} The first two assertions follow from the discussion above. It remains to verify
the description  of $\CH_0(F)_{\rm hom}^-$ as $\ker(\pi_\ast)$.

As $\pi_\ast\alpha=\pi_\ast\iota^\ast\alpha=\pi_\ast(-\alpha)$ for all $\alpha\in
 \CH_0(F)_{\rm hom}^- $ and since $\CH_0(D)_{\rm hom}$ is torsion free, the inclusion
$\CH_0(F)_{\rm hom}^-\subset\ker(\pi_\ast)$ is clear. Conversely, write a given $\alpha\in\ker(\pi_\ast)$ 
as above as $\alpha=\alpha^++\alpha^-=\pi^\ast\beta+\alpha^-$, which leads to $0=\pi_\ast\alpha=\pi_\ast\pi^\ast\beta=2\beta$ and, therefore,
$\beta=0$, i.e.\ $\alpha=\alpha^-$.
\end{proof}

\begin{remark}
The situation is very similar to the case of cubic threefolds with
$\CH_0(F_L)_{\rm hom}^-$ replacing ${\rm Prym}(C_L/D_L)$. However,
there are two notable differences caused by the torsion freeness of the Chow group. 

First,
the Prym variety ${\rm Prym}(C_L/D_L)$, defined as the image of
$1-\iota^\ast\colon \Pic^0(C_L)\to \Pic^0(C_L)$ and thus isomorphic to the quotient
$\Pic(C_L)/\Pic(C_L)^+$,  is only one of the two connected components of
the kernel of $\pi_\ast\colon\Pic^0(C_L)\to\Pic^0(D_L)$. Second,
$\pi^\ast\colon \Pic^0(D_L)\to\Pic^0(C_L)$ has a kernel of order two generated by the torsion line bundle defining the \'etale double cover $C_L\to D_L$.
In other words, the analogue of (\ref{eqn:CHpm}) in the case of cubic threefolds is not
a direct product decomposition of $\Pic^0(C_L)$ but the \'etale degree two
map $\Pic^0(D_L)\times {\rm Prym}(C_L/D_L)\twoheadrightarrow\Pic^0(C_L)$.
\end{remark}

\subsection{} The arguments to prove the first part of Theorem \ref{thm:mainChow}
follow closely ideas of Murre \cite{Murre}. At the heart of the proof is the following
geometric construction which has immediate consequences for the Chow group
of $X$. Here, as before, we use  $L,L'$, etc.\ to denote a line in $X$ as well as the corresponding point in $F_L$.

Consider a tangent direction $v$ at a point $x\in L\subset X$,
i.e.\ a line in $T_xX$, and let $L_v\subset\PP^5$ be the unique line through $x$ realizing this tangent direction. If $L_v$ is not contained in $X$, then it intersects $X$ in $x$ with multiplicity at least two and, therefore, defines a unique point $y_v\in X$ such that
$L_v\cap X=2x+y_v$, cf.\ \cite[Sec.\ 2.1.5]{HuyCubics}. This defines a dominant rational map $\xymatrix{\PP(\kt_X|_L)\ar@{..>}[r]&X}$ which is not defined at the points $v$ with $L_v\subset X$. Note that
by construction any $L_v\subset X$, $v\in \PP(\kt_X|_L)$, intersects $L$ and hence defines a point $L_v\in F_L$. Conversely, mapping a line $L'\in F_L$ distinct from $L$ to its tangent
direction at the point of intersection $L\cap L'=\{x\}$ defines a map
$F_L\setminus\{L\}\,\hookrightarrow \PP(\kt_X|_L)$, $L'\mapsto (x,T_xL')$ which extends
to a closed embedding $${\rm Bl}_{L}(F_L)\,\hookrightarrow \PP(\kt_X|_L)$$
of the blow-up ${\rm Bl}_{L}(F_L)$ of $F_L$ in the point $L\in F_L$. The blow-up
can also be described as $q^{-1}(L)$, where $\xymatrix{F&\ar[l]_-p \LL
\ar[r]^-q& X}$ denotes the universal family of lines in $X$. In particular, the exceptional 
curve in ${\rm Bl}_{L}(F_L)$ can be viewed as $p^{-1}L\subset\LL$. The picture
looks as follows:
$$\hskip-2cm\xymatrix@C=18pt@R=15pt@M=8pt{\hskip2cm\LL\ar@{^(->}[r]&\PP(\kt_X)\\
{\rm Bl}_{L}(F_L)\cong q^{-1}(L)\ar@{^(->}[r]
\ar@{^(->}[]!<5.9ex,0ex>;[u]!<5.9ex,0ex>
&\PP(\kt_X|_L)\ar@{^(->}[u]\\
\hskip1.5cm L\cong p^{-1}L\ar@{^(->}[]!<5.9ex,0ex>;[u]!<5.9ex,0ex>\ar[r]^-\sim
&
\PP(\kt_L).\ar@{^(->}[u]
}$$
We now consider the blow-up
$$\tau\colon \tilde X\coloneqq{\rm Bl}_{{\rm Bl}_L(F_L)}(\PP(\kt_X|_L))\twoheadrightarrow \PP(\kt_X|_L)$$ with
the surface ${\rm Bl}_{L}(F_L)$ as its center. Then the dominant rational map $\xymatrix{\PP(\kt_X|_L)\ar@{..>}[r]&X}$ extends to a surjective morphism $$\gamma\colon \tilde X\twoheadrightarrow X.$$
Alternatively, $\tilde X$ can be described as the incidence variety of all triples
$(x,y,L')$ consisting of a line $L'\subset\PP^5$ and points $x\in L\cap L'$,
$y\in X\cap L'$ such that $2x\subset X\cap L'$, i.e.\ $L'$ is tangent to
$X$ at the point $x$.\footnote{A.\ Kuznetsov suggested to view $\tilde X$  alternatively
as the fibre product ${\rm Bl}_L(X)\times_{\PP^3}E$, where $E\subset{\rm Bl}_L(X)$ denotes the exceptional divisor.
Note that the restriction $\phi\colon E\to \PP^3$ is generically finite of degree two and that the conic fibration
given by the first projection
$\tilde X\to E$ comes with a section.} Then $$\tau(x,y,L')=(x,T_xL')\in \PP(\kt_X|_L)\text{ and }\gamma(x,y,L')=y\in X.$$ For $L'\in F_L\setminus\{L\}$ thought of
as the point $(x,T_xL')\in\PP(\kt_X|_L)$, the fibre $\tau^{-1}(L')\subset \tilde X$ consists all triples $(x,y,L')$, where $\{x\}=L\cap L'$ and $y\in L'$ arbitrary.
In particular, $\gamma\colon\tau^{-1}(L'=(x,T_xL'))\congpf L'\subset X$.
Similarly, for $(x,T_xL)\in \PP(\kt_X|_L)$ the fibre under the blow-up
is $\{(x,y,L)\mid y\in L\}$. Hence, the pre-image $\tau^{-1}(p^{-1}L)$ of the exceptional line $L\cong p^{-1}L\subset {\rm Bl}_L(F_L)$ is naturally identified
with $L\times L$ such that $\tau$ and $\gamma$ correspond to the two-projections.
\smallskip

The morphism  $\gamma$ is generically of degree two, which
is seen as follows. For any point $y\in X\setminus L$ consider the plane cubic curve
$X\cap\overline{Ly}$. For a generic point $y$ the residual conic $Q_y$ of $L\subset X\cap\overline{Ly}$ does not contain $L$ and, therefore, intersects $L$ in at most two points $x_1,x_2\in L$. The tangent directions of the two lines connecting 
$x_1$ and $x_2$ with $y$ are the pre-images of $y$. See \cite[Cor.\ 2.1.21]{HuyCubics} 
for further details.

The induced rational covering involution
$$\xymatrix@R=10pt{\tilde X\ar[dr]_-\gamma\ar@{..>}[rr]^-j&&\tilde X\ar[dl]^-\gamma\\
&X&}$$ maps a generic point $(x_1,y,L)$ to $(x_2,y,L')$,
where $L'$ is the line through $y$ and the second point of intersection of $Q_y\cap L=\{x_1,x_2\}$.

\subsection{} Since $\tau\colon \tilde X\to\PP(\kt_X|_L)$ is the blow-up in ${\rm Bl}_L(F_L)\subset \PP(\kt_X|_L)$, the Chow group $\CH_1(\tilde X)$
naturally splits as
$$\begin{array}{rclcl}
\CH_1(\tilde X)&\cong &\tau^\ast \CH_1( \PP(\kt_X|_L))&\oplus&k_\ast\tau_0^\ast\CH_0({\rm Bl}_L(F_L))\\[6pt]
&\cong&\CH_1( \PP(\kt_X|_L))&\oplus&\CH_0(F_L).
\end{array}$$
Here, $$\tau_0\colon E\coloneqq\tau^{-1}({\rm Bl}_L(F_L))\twoheadrightarrow{\rm Bl}_L(F_L)~\text{ and }~
k\colon\tau^{-1}({\rm Bl}_L(F_L))\,\hookrightarrow \tilde X$$
are the projection, a $\PP^1$-bundle, and the natural closed embedding of the exceptional divisor.

\begin{lem}
The homologically trivial part of $\CH(\tilde X)$ sits in dimension one
and is  naturally identified with the homologically trivial part
of the center of the blow-up $\tau$, i.e.\
$$\CH(\tilde X)_{\rm hom}=\CH_1(\tilde X)_{\rm hom}\cong \CH_0(F_L)_{\rm hom}=\CH(F_L)_{\rm hom}.$$
\end{lem}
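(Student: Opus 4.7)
The plan is to extend the decomposition of $\CH_1(\tilde X)$ printed immediately above the lemma to every codimension via the blow-up formula, and then read off that only one graded piece admits homologically trivial classes.

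Write $Y\coloneqq\PP(\kt_X|_L)$ and $Z\coloneqq{\rm Bl}_L(F_L)\subset Y$, the centre, which has codimension two, with exceptional divisor $E\subset\tilde X$, bundle projection $\tau_0\colon E\to Z$, and closed embedding $k$ as in the preamble to the lemma. The standard blow-up formula in codimension two yields canonical, cycle-class-compatible splittings
$$\CH_m(\tilde X)\simeq\tau^\ast\CH_m(Y)\oplus k_\ast\tau_0^\ast\CH_{m-1}(Z)$$
for every $m\in\ZZ$. The first summand has vanishing homologically trivial part in every degree: $Y$ is a projective bundle over $L\cong\PP^1$, and the projective bundle formula presents $\CH_\ast(Y)$ as a finitely generated free abelian group mapping injectively into $H^\ast(Y,\ZZ)$. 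For the second summand, $Z$ is the blow-up of the smooth surface $F_L$ in the point corresponding to $L$, so $\CH_2(Z)=\ZZ$, $\CH_0(Z)_{\rm hom}\simeq\CH_0(F_L)_{\rm hom}$ (any two points on the exceptional $\PP^1$ are rationally equivalent), and $\Pic^0(Z)\simeq\Pic^0(F_L)$; the right-hand side vanishes because $q(F_L)=0$ by Theorem \ref{thm:topoDF}(i), whence $\CH_1(Z)_{\rm hom}=\Pic^0(Z)=0$.

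Assembling these facts gives $\CH_m(\tilde X)_{\rm hom}\simeq\CH_{m-1}(Z)_{\rm hom}$, which vanishes for $m\ne 1$ and equals $\CH_0(F_L)_{\rm hom}$ for $m=1$, exactly the claim. No step of the argument is genuinely hard: the only non-formal input is the regularity $q(F_L)=0$ established in Section \ref{sec:Top}, and the rest is a mechanical application of the codimension-two blow-up formula and the projective bundle formula.
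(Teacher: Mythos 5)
Your proof is correct and takes essentially the same route as the paper, whose entire argument is the single observation that the first summand of the blow-up decomposition injects into cohomology; you merely make explicit the verification in degrees $m\ne 1$ (via the projective bundle formula for $\PP(\kt_X|_L)$, $\CH_2(Z)=\ZZ$, and $\Pic^0(Z)=0$ from $q(F_L)=0$), which the paper leaves implicit. Nothing needs to change.
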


\begin{proof}
The assertion follows directly from the fact that the first summand in the above
decomposition injects into cohomology.
\end{proof}

The component in $\CH_0(F_L)$ of a curve class $\alpha\in \CH_1(\tilde X)$   is computed
by $-\tau_{0\ast}\alpha|_E$. For example, if $C\subset \tilde X$ intersects only one fibre
$\tau_0^{-1}(L')$ and does so transversally  in only one point, then $[C]$ is mapped (up to the sign) to
the class of that fibre
$-[\tau_0^{-1}(L')]\in k_*\tau_0^*\CH_0(F_L)$.

\begin{lem}
The direct image $\gamma_\ast\colon \CH(\tilde X)\to \CH(X)$ defines a
surjective map
$$\CH_1(\tilde X)_{\rm hom}\twoheadrightarrow \CH_1(X)_{\rm hom}.$$
Furthermore, $\CH_1(X)$ is  generated by lines intersecting a fixed line $L$.
\end{lem}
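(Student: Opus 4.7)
The central geometric input is the following: for any $L'\in F_L\setminus\{L\}$ the fibre $\tau_0^{-1}(L')\subset E$ is a $\PP^1$ which $\gamma$ maps isomorphically onto the line $L'\subset X$. This follows from the description of $\tilde X$ as the variety of triples $(x,y,L')$ with $\{x\}=L\cap L'$ and $y\in L'$, since $\gamma$ records the point $y$. Hence $\gamma_\ast[\tau_0^{-1}(L')]=[L']$ in $\CH_1(X)$, and under the identification $\CH_1(\tilde X)_{\rm hom}\cong\CH_0(F_L)_{\rm hom}$ of the previous lemma, the difference $[\tau_0^{-1}(L')]-[\tau_0^{-1}(L_0)]$ corresponds to $[L']-[L_0]\in\CH_0(F_L)_{\rm hom}$ for any fixed $L_0\in F_L$.

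I would establish the second assertion first. It is classical that the Chow group $\CH_1$ of a smooth cubic hypersurface is spanned by classes of the lines it contains, so it suffices to show that every line $L'\subset X$ has class in the subgroup of $\CH_1(X)$ generated by classes of lines in $F_L$. For $L'$ disjoint from $L$, the three-plane $\overline{LL'}\cong\PP^3$ cuts out a smooth cubic surface $S=X\cap\overline{LL'}$ containing both $L$ and $L'$. In the pencil of plane sections of $S$ through $L$, the residual conic degenerates at five members to a pair of lines $L_a+L_b$ meeting $L$, giving $h|_S=[L]+[L_a]+[L_b]$ in $\CH_1(S)$ and hence $h^3=[L]+[L_a]+[L_b]$ in $\CH_1(X)$. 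The same decomposition applied to a plane through $L'$ yields $h^3=[L']+[L''_a]+[L''_b]$, so subtracting expresses $[L']$ as a $\ZZ$-combination of $[L],[L_a],[L_b],[L''_a],[L''_b]$. Of the two residual lines $L''_a,L''_b$, one meets $L$ while the other does not; iterating the same construction with a cubic surface section containing $L$ and the remaining residual line absorbs the obstruction.

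For the first assertion, given $\alpha\in\CH_1(X)_{\rm hom}$, use the second assertion to write $\alpha=\sum_i n_i[L_i]$ with $L_i\in F_L$. Since each class $[L_i]$ equals the common cohomology class $h^3\in H^6(X,\ZZ)$ and $\alpha$ is homologically trivial, $\sum_i n_i=0$. Fixing some $L_0\in F_L$ and setting
$$\beta\coloneqq\sum_i n_i\bigl([\tau_0^{-1}(L_i)]-[\tau_0^{-1}(L_0)]\bigr)\in\CH_1(\tilde X),$$
the identity $\gamma_\ast\beta=\alpha$ is immediate from the fibre isomorphism $\gamma\colon\tau_0^{-1}(L')\congpf L'$. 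Moreover, $\beta\in\CH_1(\tilde X)_{\rm hom}$ because, under the previous lemma's identification $\CH_1(\tilde X)_{\rm hom}\cong\CH_0(F_L)_{\rm hom}$, the class $\beta$ corresponds to $\sum_i n_i([L_i]-[L_0])\in\CH_0(F_L)_{\rm hom}$, which is homologically trivial since $\sum_i n_i=0$. Therefore $\alpha=\gamma_\ast\beta$ lies in the image of $\gamma_\ast$.

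The principal difficulty lies in the second assertion, specifically in the inductive bookkeeping required to absorb each residual line not in $F_L$ through a fresh cubic surface section containing $L$ and to guarantee that the process terminates. Once the generation of $\CH_1(X)$ by lines meeting $L$ is in hand, the first assertion follows formally from the explicit description of $\CH_1(\tilde X)_{\rm hom}$ in the previous lemma and the fibre isomorphism $\gamma\colon\tau_0^{-1}(L')\congpf L'$.
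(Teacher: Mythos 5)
There is a genuine gap, and it sits exactly where you flag it: the termination of the ``absorption'' process in your proof of the second assertion. Your observation that at least one of the residual lines $L''_a,L''_b$ meets $L$ is correct (the point $L\cap P'$ lies on $X\cap P'=L'\cup L''_a\cup L''_b$ but not on $L'$), but nothing forces the other one to meet $L$; when it does not, one iteration only yields $[L']\equiv -[L''_b]$ and then $[L''_b]\equiv -[L''']$ modulo the subgroup generated by lines meeting $L$, with $L'''$ again possibly disjoint from $L$. There is no decreasing invariant, so the induction has no reason to terminate, and this step is the entire content of the second assertion. Since your proof of the first assertion is explicitly built on the second, the gap propagates to both claims.

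The paper runs the logic in the opposite direction, which is what makes it work. Surjectivity of $\gamma_\ast$ is proved first and formally: $\gamma$ has degree two, so $\gamma_\ast\circ\gamma^\ast=2\cdot{\rm id}$; by the cited results of Paranjape, Shen and Tian--Zong, $\CH_1(X)$ is generated by lines, so homological and algebraic equivalence agree on $1$-cycles and $\CH_1(X)_{\rm hom}$ is divisible; hence any $\alpha\in\CH_1(X)_{\rm hom}$ can be written as $\alpha=2\beta=\gamma_\ast(\gamma^\ast\beta)$ with $\gamma^\ast\beta$ homologically trivial. The second assertion is then a consequence rather than an input: $\CH_1(\tilde X)_{\rm hom}$ lies in the subgroup generated by the fibre classes $[\tau^{-1}(L')]$, which push forward to classes of lines meeting $L$, and $\CH_1(X)$ is generated by $\CH_1(X)_{\rm hom}$ together with the class of a single such line. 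If you want to keep your order of argument, you would have to import the divisibility (or some monodromy/connectedness statement about the family of lines) anyway, at which point the cubic-surface bookkeeping becomes unnecessary. Your deduction of the first assertion from the second is otherwise sound, apart from the minor point that the fibre over the point $L\in F_L$ itself is not of the form $\tau^{-1}(L')\cong L'$ and should be avoided as a generator.
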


\begin{proof}
As $\gamma$ is of degree two, we know that $\gamma_\ast\circ\gamma^\ast=2\cdot{\rm id}$. As $\CH_1(X)$ is generated by lines, see \cite{Paranjape,Shen}
or the more general result  \cite[Thm.\ 1.6]{TianZong}, homological equivalence equals algebraic equivalence. Thus, $\CH_1(X)_{\rm hom}$ is
divisible and the assertion follows. 

The second assertion follows from
$\CH_1(\tilde X)_{\rm hom}$ being contained in the subgroup generated by classes of
 fibres $L'\cong\tau_0^{-1}(L')\subset \tilde X$ over $L'\in {\rm Bl}_L(F_L)$.
\end{proof}

\begin{lem}
The pull-back $\gamma^\ast\colon \CH_1(X)\to \CH_1(\tilde X)$ maps the subgroup
$\CH_1(X)_{\rm hom}\subset \CH_1(X)$ onto $\CH_0(F_L)_{\rm hom}^-\subset\CH_0(F_L)_{\rm hom}\cong
\CH_1(\tilde X)_{\rm hom}$.
\end{lem}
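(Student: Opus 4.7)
My plan is to compute $\gamma^\ast[L']$ explicitly for a generic line $L'\in F_L$, read off its image under the projection $q\colon \CH_1(\tilde X)\to \CH_0(F_L)$ onto the second summand of the blow-up decomposition, and show that it equals $(1-\iota^\ast)[L']$. Granted this, containment of the image of $q\circ\gamma^\ast$ in $\CH_0(F_L)_{\rm hom}^-$ is immediate, while surjectivity follows from the previous lemma (which generates $\CH_1(X)_{\rm hom}$ by differences $[L'_1]-[L'_2]$ with $L'_i\in F_L$) combined with Lemma~\ref{lem:ChF-} (which writes $\CH_0(F_L)_{\rm hom}^-$ as $(1-\iota^\ast)\CH_0(F_L)_{\rm hom}$).

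The geometric step is to describe $\gamma^{-1}(L')$. Setting $L''\coloneqq \iota(L')$ and $x''\coloneqq L\cap L''$, for generic $y\in L'$ the plane $\overline{Ly}=\overline{LL'}$ cuts $X$ in the triangle $L+L'+L''$, so the conic $Q_y=L'+L''$ degenerates and $Q_y\cap L=\{x,x''\}$ with $x=L\cap L'$. The two preimages of $y$ under $\gamma$ are $(x,y,L')\in E_{L'}=\tau^{-1}(L')$ and $(x'',y,\overline{x''y})$, the latter lying outside $E$. Letting $y$ vary, the second collection fills out an irreducible curve $C_{L'}\subset\tilde X$ meeting $E$ transversally at a single point, sitting in the fibre $E_{L''}$ over $L''$ and corresponding to the direction $T_{x''}L''$ (reached precisely when $y=L'\cap L''$). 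The rational involution $j$ swaps these two components, so $\gamma^\ast[L']=[E_{L'}]+[C_{L'}]$ in $\CH_1(\tilde X)$.

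The remaining sign computation is the only non-formal step. Since $[E_{L'}]=k_\ast\tau_0^\ast[L']$ by construction, the self-intersection formula applied with $c_1(\kn_{E/\tilde X})=-\xi$ together with the identity $\tau_{0\ast}(\xi)=1$ on the exceptional $\PP^1$-bundle $\tau_0\colon E\to {\rm Bl}_L F_L$ gives $q([E_{L'}])=[L']$, while the transverse-intersection analysis of $C_{L'}$ at the single point of $E$ lying over $L''$ gives $q([C_{L'}])=-[\iota L']$. Summing,
\begin{equation*}
q(\gamma^\ast[L'])=[L']-[\iota L']=(1-\iota^\ast)[L'],
\end{equation*}
exactly as predicted. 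This identity can equivalently be viewed as the translation of the tautological relation $j^\ast\gamma^\ast=\gamma^\ast$ through the identification $\CH_1(\tilde X)_{\rm hom}\cong\CH_0(F_L)_{\rm hom}$: the sign twist built into the blow-up formula converts $j^\ast$ into $-\iota^\ast$, so $j^\ast$-invariants become $\iota^\ast$-anti-invariants. I expect the main obstacle to lie precisely in this sign; had $E_{L'}$ and $C_{L'}$ projected with equal signs, $\gamma^\ast$ would instead land in the $\iota^\ast$-invariant part (in agreement with $\gamma\circ j=\gamma$), and it is the tautological $\ko_E(-1)$ on the exceptional $\PP^1$-bundle that flips this and routes the image into the anti-invariant part.
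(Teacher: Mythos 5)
Your proposal is correct and follows essentially the same route as the paper: both identify $\gamma^{-1}(L')$ with $\tau^{-1}(L')$ plus its image under the covering involution $j$, observe that the latter curve meets the exceptional divisor transversally over the point $\iota(L')$, and read off from the blow-up decomposition that the homologically trivial part of $\gamma^\ast[L']$ is $[L']-[\iota^\ast L']$, whence surjectivity onto $\CH_0(F_L)^-_{\rm hom}$ via Lemma~\ref{lem:ChF-}. Your additional remarks on the sign coming from $\ko_E(-1)$ make explicit what the paper leaves as a one-line convention, but the substance is identical.
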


\begin{proof}
We have to show that the image of $\gamma^\ast\colon \CH_1(X)_{\rm hom}\to\CH_0(F_L)$ is 
$\CH_0(F_L)^-$. Clearly, the image is invariant under the covering involution $j^\ast$
and we also know that $\CH_1(X)$ is generated by the classes of lines $L'\in F_L$.

Fix a generic line $L'\in F_L$.   As explained above,
$\tau^{-1}(L')\subset \tilde X$ consists of all triples $(x',y,L')$ with $\{x'\}=L\cap L'$ and $y\in L'$ arbitrary.
By definition, $j(x',y,L')=(x'',y,L'')$, where $x''$ is the point of intersection of $L$ with the residual line $\iota(L')$ of $L\cup L'\subset X\cap\overline{LL'}$, i.e.\ $L\cup L'\cup \iota(L')=X\cap \PP^2$ and $L''\coloneqq\overline{x''y}$.

The second component of the class $[j(\tau^{-1}(L'))]\in \CH_1(\tilde X)=
\CH_1(\PP(\kt_X|_L))\oplus\CH_0(F_L)$ is  $-[\tau^{-1}(L'')]$, because
 $j(\tau^{-1}(L'))$ intersects the exceptional divisor
$\tau^{-1}({\rm Bl}_L(F_L))$ transversally in the point $(x'',z,\iota(L'))$, where
$\{z\}=L'\cap \iota(L')$.
Therefore, the homologically trivial part of $\gamma^\ast[L']$
is nothing but $[\tau^{-1}(L')]+[j(\tau^{-1}(L'))]=[\tau^{-1}(L')]-[\tau^{-1}(\iota(L'))]$,
which corresponds to $[L']-[\iota(L')]\in \CH_0(F_L)^-$. As classes of this form generate $\CH_0(F_L)^-$, this proves surjectivity.
\end{proof}

\begin{lem}\label{lem:2alpha}
Consider a class $\alpha\in \CH_0(F_L)^-\subset \CH_0(F_L)\subset\CH_1(\tilde X)$. Then $\gamma^\ast\gamma_*(\alpha)=2\alpha$.
\end{lem}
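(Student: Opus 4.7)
My approach would be to reduce the claim to the preceding lemma combined with the standard projection formula. The previous lemma establishes that $\gamma^\ast: \CH_1(X)_{\rm hom} \twoheadrightarrow \CH_0(F_L)^-$ is surjective, so my first step is to pick a class $\beta \in \CH_1(X)_{\rm hom}$ with $\gamma^\ast\beta = \alpha$. Then the second and last step is simply to compute $\gamma_\ast\alpha = \gamma_\ast\gamma^\ast\beta = 2\beta$ by invoking the degree-two projection formula, and to pull back once more to get $\gamma^\ast\gamma_\ast\alpha = 2\gamma^\ast\beta = 2\alpha$.

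The only ingredient that requires verification is that the identity $\gamma_\ast\gamma^\ast = \deg(\gamma)\cdot\id = 2\cdot\id$ is indeed available for $\gamma: \tilde X \to X$ in this setting. This is the standard projection formula for a proper, surjective, generically finite morphism of degree two between smooth varieties of the same dimension. Properness, surjectivity, and generic degree two of $\gamma$ are all established in the body of the paper during the construction of $\tilde X$. Smoothness of $\tilde X$ is also built in, since $\tilde X$ is an iterated blow-up of the smooth variety $\PP(\kt_X|_L)$ along the smooth surface ${\rm Bl}_L(F_L)$, itself the blow-up of the smooth surface $F_L$ at the single point $L \in F_L$.

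In summary, there is essentially no technical obstacle: all the real content has already been packaged into the previous lemma, and the present statement is just a two-line consequence of its surjectivity together with $\gamma_\ast\gamma^\ast = 2\cdot\id$. The only part of the argument that one could worry about — whether $\gamma^\ast$ makes sense at the level of Chow groups in a way compatible with the projection formula — is settled once one notes that $\tilde X$ is smooth, which is clear from the construction.
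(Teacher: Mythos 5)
Your proof is correct and is essentially identical to the paper's: both reduce to classes of the form $\gamma^\ast\beta$ via the surjectivity established in the preceding lemma and then apply $\gamma_\ast\circ\gamma^\ast=2\cdot{\rm id}$ for the generically finite degree-two morphism $\gamma$.
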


\begin{proof}
By the surjectivity of $\gamma^\ast\colon \CH_1(X)_{\rm hom}\twoheadrightarrow \CH_0(F_L)^-$ proved in the previous lemma, it suffices to verify the assertion for classes of the form $\gamma^\ast\beta$, which follows from $\gamma_\ast\circ\gamma^\ast=2\cdot{\rm id}$.
\end{proof}

\subsection{}\label{sec:proofCH}
We can now conclude the proof of the first part of Theorem \ref{thm:mainChow},
by adapting arguments of Murre \cite{Murre} to the case of cubic fourfolds.

First observe that
for $L\ne L'\in F_L$ the morphism $\gamma\colon \tilde X\to X$ induces
an isomorphism $\gamma\colon \tau^{-1}(L')\congpf L'\subset X$.
Hence, for two arbitrary points $L',L''\in F_L$ the map
\begin{equation}\label{eqn:CHF}
\xymatrix{\CH_0(F_L)_{\rm hom}\cong \CH_1(\tilde X)_{\rm hom}\ar@{->>}[r]^-{\gamma_\ast}&\CH_1(X)_{\rm hom}}
\end{equation} sends $[L']-[L'']\in \CH_0(F_L)_{\rm hom}$ 
to the class $[L']-[L'']\in \CH_1(X)_{\rm hom}$. In other words,
$$\xymatrix@C=28pt{\CH_0(F_L)_{\rm hom}\ar[r]^-{k_*\circ\tau_0^\ast}& \CH_1(\tilde X)_{\rm hom}\ar[r]^-{\gamma_*} &\CH_1(X)_{\rm hom}}$$
is indeed the map induced by the Fano correspondence.

Next recall from  the proof in Section \ref{sec:primanti} the standard fact that we have  $[L']+[L'']=[X\cap\PP^2]-[L]\equiv{\rm const}$ in $\CH_1(X)$
for any
$L'\in F_L$ and its image $L''\coloneqq\iota(L')\in F_L$ under the covering involution
of $\pi\colon F_L\to D_L$. Combining this with Lemma \ref{lem:ChF-}, we find that
the map (\ref{eqn:CHF}) factors through a surjection $\CH_0(F_L)_{\rm hom}^-\twoheadrightarrow \CH_1(X)_{\rm hom}$. However, as by virtue of Lemma \ref{lem:2alpha} we know that $\gamma^\ast\gamma_\ast(\alpha)=2\alpha$ for all $\alpha\in \CH_0(F_L)^-$ and since $\CH_0(F_L)$ is torsion free, the map is in fact
bijective.

\subsection{}\label{sec:ShenBP} The approaches to Theorem \ref{thm:mainChow} here and in \cite[Thm.\ 4.7]{Shen0} are different. Shen uses a family of correspondences of degree five for the Fano surfaces $F_L$, similar to the construction in \cite[Sec.\ 3]{Shen} for higher degree rational curves. The action of this correspondence is the composition of the Fano correspondence and its dual, which he identifies with the action $\iota^\ast-{\rm id}$. Our arguments instead rely on techniques due to Murre for cubic threefolds \cite{Murre} and, at least at first glance, there does not seem to be any link between the two.

\subsection{} Consider the general situation of \ref{sec:FDgen}, i.e.\ $\pi\colon F\twoheadrightarrow D$ is a finite morphism of degree two between surfaces with vanishing irregularity
and $\iota$ is the covering involution with its graph
$\Gamma_\iota\subset F\times F$.
Then the rational Chow motive $\hh(F)\in{\rm Mot}(\CC)$ decomposes
into $$\hh(F)\cong\hh(F)^+\oplus \hh(F)^-,$$
where $\hh(F)\coloneqq(F,[\Delta_F])$ and $\hh(F)^\pm\coloneqq(F,(1/2)([\Delta]\pm[\Gamma_\iota])$.
On the other hand, $$\hh(F)\cong\hh^0(F)\oplus\hh^2(F)\oplus\hh^4(F).$$
Note that due to $q(F)=0$, the odd parts $\hh^1(F)$ and $\hh^3(F)$ are both trivial
and recall that $\hh^0(F)\coloneq (F,p_0=[x\times F])\cong\LL^0$, 
$\hh^4(F)\coloneqq (F,p_4=[F\times x])\cong\LL^2$, and $\hh^2(F)\coloneqq
(F,[\Delta]-p_0-p_4)$.
The two decompositions are compatible in the sense that
$$\hh(F)^+\cong\hh^0(F)\oplus\hh^2(F)^+\oplus \hh^4(F)\text{ and }\hh(F)^-\cong\hh^2(F)^-.$$
Eventually, the choice of a polarization on $F$ allows one to write
$$\hh^2(F)\cong\hh^2(F)_{\rm pr}\oplus\LL
~ \text{ and }~\hh(F)^-=\hh^2(F)^-\cong\hh^2(F)^-_{\rm pr}\oplus \LL.$$

\subsection{}\label{sec:proofMot}  The Chow motive of a smooth cubic fourfold $X\subset\PP^5$ naturally decomposes
as $$\hh(X)\cong\bigoplus_{i=0}^4\LL^i\oplus \hh^4(X)_{\rm pr}.$$
The first summand corresponds to the image of $H^\ast(\PP^5,\QQ)\to H^\ast(X,\QQ)$ while the cohomology of the second is $H^4(X,\QQ)_{\rm pr}$. Recall that due to results of Bloch--Srinivas, cf.\ \cite[Prop.\ 22.27]{VoisinHodge}, and Murre \cite{Murre2}, the cycle class map $\CH^2(X)\otimes\QQ\to H^4(X,\QQ)$ is injective for the rationally connected variety $X$. Together with the Hodge conjecture for cubic fourfolds \cite{VoisinGT}, this gives $$\CH(\hh^4(X)_{\rm pr})\otimes\QQ= (\CH_2(X)_{\rm pr}\oplus \CH_1(X)_{\rm hom})\otimes\QQ\cong
 H^{2,2}(X,\QQ)_{\rm pr}\oplus \CH_1(X)_{\rm hom}\otimes\QQ.$$




Now, the Fano correspondence induces a morphism 
\begin{equation}\label{eqn:natmorphh}
\hh^2(F_L)_{\rm pr}^-\to\hh^4(X)_{\rm pr}(1)
\end{equation}
in ${\rm Mot}(\CC)$. Taking Chow groups (with $\QQ$-coefficients)
gives  $$\CH_0(F_L)^-
\oplus\CH_1(F_L)^-_{\rm pr}\to \CH_1(X)_{\rm hom}\oplus\CH_2(X)_{\rm pr}.$$
The first component is nothing but the isomorphism $\CH_0(F_L)^-_{\rm hom}\congpf \CH_1(X)_{\rm hom}$ proved in \ref{sec:proofCH}. The second defines the map
$$\xymatrix{H^{1,1}(F_L,\QQ)_{\rm pr}^-\cong\CH_1(F_L)^-_{\rm pr}\otimes\QQ\ar[r]& \CH_2(X)_{\rm pr}\otimes\QQ\ar[r]^-\sim& H^{2,2}(X,\QQ)_{\rm pr},}$$ which according to Proposition \ref{prop:MainHodge} is an isomorphism.

To conclude the verification of the second part of Theorem \ref{thm:mainmot}, i.e.\ the isomorphism $\hh^2(F)^-_{\rm pr}\cong\hh^4(X)_{\rm pr}(1)$, it is enough to observe that (\ref{eqn:natmorphh}) induces isomorphisms between their Chow groups even after arbitrary base field extension. By Manin's identity principle,  combine \cite[Lem.\ 1]{GG} and \cite[Lem.\ 3.2]{Pedrini}, see also
\cite[Lem.\ 1.1]{HuyMotK3} and \cite[Lem.\ 4.3]{Vial}, this implies that (\ref{eqn:natmorphh}) is indeed an isomorphism. This last argument is rather standard and essentially identical to the 
proof in \cite{BP}.


\bigskip

\appendix{}
\section{}
\centerline{ \scshape by John Ottem}

\bigskip

Let $X$ be a smooth cubic fourfold and $L\subset X$ a line. In this appendix, we prove that the surface $F_L\subset F(X)$ of lines in $X$ meeting $L$ is simply connected. 

\begin{thm}\label{maintheorem}
The surface $F_L$ is simply connected. 
\end{thm}

\def\F{ F'}
Let $\F$ denote the blow-up of $F_L$ in the point $[L]\in F_L\subset F(X)$. The starting point of the proof is the fact that $\F$ embeds into the projective bundle
$\PP(\kt_X|_L)$ over $L=\PP^1$.
Taking $L$ general, we may assume that $\kt_X|_L\simeq \E$, where $\E$ is the following vector bundle on $\PP^1$:
$$\E=\ko_{\PP^1}^2\oplus \ko_{\PP^1}(1)\oplus \ko_{\PP^1}(2).$$

Write $\rho \colon \PP(\E) \to \PP^1$ for the bundle projection. Then the Picard group of $\PP(\E)$ is generated by $\rho^\ast\ko(1)$ and 
the relative tautological $\ko_\rho(1)$. We will be interested in the following two line bundles
$$\kl_1\coloneqq\rho^\ast\ko(3)\otimes\ko_\rho(2) \quad \mbox{ and }\quad \kl_2\coloneqq 
\rho^\ast\ko(3)\otimes\ko_\rho(3).$$

\def\V{E}
 The surface $\F\subset \PP(\E)$ is defined by a section of the vector bundle $\V$ appearing as the extension, cf.\ \cite[Prop.\ 2.3.10]{HuyK3}:
\begin{equation}\label{extension}
0\to \kl_2 \to \V \to \kl_1\to 0.
\end{equation} In particular, we have 
\begin{equation}\label{flag}
\F=V(s_2)\subset V(s_1)\subset \PP(\E),
\end{equation}where $s_1\in H^0(\PP(\E),\kl_1)$ and $s_2\in H^0(V(s_1),\kl_2|_{V(s_1)})$.  

We will prove Theorem \ref{maintheorem} by a Lefschetz-type argument. This is slightly delicate, because the line bundles $\kl_1$ and $\kl_2$ are not ample. However, as we will see, they still have just enough positivity to make the argument work.

By Corollary \ref{parameterspace} below, the surface $\F$ is deformation equivalent to a surface $S=V(s)\subset D$ for a general divisor $D\in |\kl_1|$ and a general section $s\in H^0(D,\kl_2|_D)$. 

It will be convenient to pick sections $x_0,x_1,y_0,y_1,y_2,y_3$ such that:

\begin{itemize}
    \item 
$H^0(\PP(\E), \rho^\ast\ko(1))=\langle x_0,x_1 \rangle$

\item $H^0(\PP(\E), \ko_\rho(1))=\langle y_0, y_1 \rangle$

\item $H^0(\PP(\E), \rho^\ast\ko(1)\otimes\ko_\rho(1))=\langle x_0y_0, x_0y_1, x_1y_0, x_1y_1, y_2 \rangle$

\item $H^0(\PP(\E), \rho^\ast\ko(2)\otimes\ko_\rho(1))=\langle x_0^2y_0, x_0^2y_1, x_0x_1y_0, x_0x_1y_1,x_0y_2,x_1^2y_0,x_1^2y_1,x_1y_2,y_3\rangle$.
\end{itemize}
\smallskip

In terms of these sections, a basis of $H^0(\PP(\E),\kl_1)$ is given by the following 25 monomials:
\begin{align}\label{monomials}x_0^i x_1^{3-i}y_0^2, \quad x_0^i x_1^{3-i}y_0y_1,\quad x_0^i x_1^{3-i}y_1^2,\quad  x_0^i x_1^{2-i}y_0y_2,\\ \nonumber \quad x_0^i x_1^{2-i}y_1y_2, \quad x_iy_0y_3,\quad  x_iy_1y_3, \quad x_i y_2^2, \quad  y_2y_3.\end{align}
Similarly, $H^0(\PP(\E),\kl_2)$ has a basis consisting of the 38 monomials:
\begin{align}\label{monomials2}
x_0^i x_1^{3-i}y_0^3, \,\ldots,\, 
x_0^i x_1^{3-i}y_1^3,\quad 
x_0^i x_1^{2-i}y_0^2y_2,\ldots,  x_0^i x_1^{2-i}y_1^2y_2,\\ \nonumber \quad 
x_i y_0^{j}y_1^{2-j}y_3, \quad x_iy_0y_3,\quad  x_iy_1y_3, 
y_0y_2y_3,\quad
y_1y_2y_3,\quad
y_2^3.\end{align}

\begin{lem}\label{lemmaD1D2} The base locus of both linear systems
$|\kl_1|$ and $|\kl_2|$ is the curve $$Z=\PP(\ko(2))= V(y_0,y_1,y_2)\subset \PP(\E).$$A general divisor in $|\kl_1|$ is non-singular; a general divisor of $|\kl_2|$ has multiplicity two along $Z$.
\end{lem}

\begin{proof}
Both base loci must be contained in the base locus of $\rho^\ast\ko(1)\otimes\ko_\rho(1)$ which clearly equals $Z$, as we see using the basis above. Conversely, every monomial in \eqref{monomials} and \eqref{monomials2} is divisible by either $y_0,y_1$ or $y_2$, so $Z$ is exactly the base locus.

For the second claim, one easily checks that $x_0^3y_0^2+x_1^3y_0y_1+x_0^2x_1y_1^2+y_2y_3=0$ defines a non-singular divisor in $\PP(\E)$. Similarly, $y_1y_2y_3$ has multiplicity two along $Z$ and every other monomial of \eqref{monomials2} vanishes to order at least two there.
\end{proof}

\begin{remark}
It is natural to wonder whether the surface $\F$ is actually a complete intersection of divisors in $|\kl_1|$ and $|\kl_2|$ on $\PP(\E)$ (or stronger, whether the sequence \eqref{extension} splits).  However, this is excluded by  Lemma \ref{lemmaD1D2}, as any such complete intersection will be singular along $Z$, but $\F$ is smooth.

In fact,  for any $D\in |\kl_1|$ the restriction $\kl_2|_{D}$ admits a section which is not a restriction from $\PP(\E)$. For example, for $D=V(s_1)$ one can take
the section defining $\F\subset V(s_1)$. Indeed,  this follows from the exact sequence
\begin{equation}\label{sheafsequence}
0\to \ko_\rho(1) \to \kl_2 \to \kl_2|_{V(s_1)}\to 0
\end{equation}
and $H^1(\PP(\E),\ko_\rho(1))\cong H^1(\PP^1,\ko(-2))\cong \CC$, and $H^1(\PP(\E),\kl_2)\cong H^1(\PP^1, S^2(\E^\vee)\otimes \ko_{\PP^1}(3))=0$.
\end{remark}

By the sequence \eqref{sheafsequence}, the line bundle $\kl_2|_{D}$ has a 37-dimensional space of global sections for every $D\in |\kl_1|$. This implies that the parameter space of surfaces of the form $V(s_2)\subset V(s_1)$ as in (\ref{flag}) is a projective bundle over a projective space, hence it is a smooth projective variety.

\begin{cor}\label{parameterspace}
The parameter space of surfaces $\F$ appearing as in a flag \eqref{flag} is irreducible.
\end{cor}
Let $X\coloneqq D\setminus Z$ and let
$$
\pi \colon X\to \PP^{36}
$$ be  the morphism defined by the linear system $|\kl_2|_{D}|$.

\begin{lem}
The morphism $\pi$ is generically injective, and contracts no divisor.
\end{lem}
\begin{proof}
The 38 sections in \eqref{monomials2} define a morphism
$$
\tau\colon\PP(\E) \setminus Z \to \PP^{37}.
$$

We claim that $\tau$ contracts only the surface $V(y_0,y_1)$ to a point and  is an embedding on the complement $\PP(\E)\setminus V(y_0,y_1)$. Note first that $\tau$ is a toric morphism. This implies that the locus of points $z\in \PP^{37}$ for which the fiber $\tau^{-1}(z)$ has positive dimension is a union of linear subvarieties of $\PP^{37}$. Given the 38 monomials above, it is easy (for instance, in Macaulay2) to compute the fibers $\tau^{-1}(e_0), \ldots, \tau^{-1}(e_{37})$ over the standard coordinate points. Doing this, we find that there is a single point with positive dimensional fiber, namely $\tau^{-1}( [0:\ldots:0:1]) = V(y_0,y_1)$; for the remaining points, the fiber $\tau^{-1}(e_k)$ is either empty or consists of a single point. By semicontinuity of fiber dimension, this implies that $\tau$ is injective on the complement $\PP(\E)\setminus V(y_0,y_1)$.

Now, note that the intersection $\Gamma=D\cap V(y_0,y_1)$ is one-dimensional. This follows by Bertini, because the base locus of $\kl_1$ is $Z$ and $D$ is assumed to be general. It follows that the restriction $\tau|_{D\setminus Z} \colon D\setminus Z\to \PP^{37}$ contracts at most finitely many curves to points. This morphism is defined by a subsystem of $|\kl_2|_{D}|$, so the same conclusion holds for $\pi$.
\end{proof}

\begin{lem}\label{D1simply} The generic 
$D\in |\kl_1|$ is a  smooth projective rational threefold and hence simply connected.
\end{lem}

\begin{proof}  The generic $D\in|\kl_1=\rho^\ast(\ko(3)\otimes\ko_\rho(2)|$ 
is smooth, see Lemma \ref{lemmaD1D2}, and the projection $D\to \PP^1$ is generically a quadric bundle over $\PP^1$. Hence, $D$ is rational.
\end{proof}

\begin{proof}[Proof of Theorem \ref{maintheorem}]
Let $H\subset \PP^{36}$ denote a general hyperplane. 
Since $\pi$ defines a small birational morphism, we may apply a theorem of Goresky--MacPherson \cite[p.\ 150--151]{GMref} (where with the notation there $\hat n =2$), and deduce that the natural map
$$
\pi_i(\pi^{-1}(H)) \to \pi_i(X)
$$is an isomorphism for $i< 2$. However, $\pi_1(X)=0$ by Lemma \ref{D1simply}, because $D$ is simply connected and $X$ is obtained from $D$ by removing a closed subset of real codimension four.

Let now $S\in |\kl_2|_{D}|$ be a general divisor. Then $S$ is non-singular and irreducible, because $F$ is of this form. From the previous paragraph, we know that $S\setminus Z$, which equals $\pi^{-1}(H)$ for a general $H$, is simply connected. As $\pi_1(S\setminus Z)$ surjects onto $\pi_1(S)$, we deduce that $S$ is simply connected as well.  Finally, we note that $\F$ is deformation equivalent to $S$, so $\F$ and hence $F$ is simply connected. This completes the proof of Theorem \ref{maintheorem}.
\end{proof}


\begin{thebibliography}{HLSOMY}

\bibitem[AT14]{AT} N.\ Addington, R.\ Thomas
\em Hodge theory and derived categories of cubic fourfolds. \em
Duke Math.\ J.\ 163 (2014), 1885--1927. 

\bibitem[Am09]{Amerik}
E.\ Amerik
\em A computation of invariants of a rational self-map. \em
Ann.\ Fac.\ Sci.\ Toulouse Math.\ 18 (2009), 445--457.

\bibitem[BHPV04]{BPV}
W.\ Barth, K.\ Hulek, C.\ Peters, and A.\ Van de Ven
\em Compact complex surfaces. \em Ergebnisse der Mathematik und ihrer Grenzgebiete. Vol.\ 4. Springer-Verlag, Berlin, 2004.

\bibitem[Be77]{BeauvPrym}
A.\ Beauville
\em Vari\'et\'es de Prym et jacobiennes interm\'ediaires. \em
Ann.\ Sci.\ \'Ecole Norm.\ Sup.\ 10 (1977), 309--391.

\bibitem[Be79a]{Beauvcan}
A.\ Beauville
\em L'application canonique pour les surfaces de type g\'en\'eral. \em
Invent.\ math.\ 55 (1979), 121--140.

\bibitem[Be79b]{Beamu5}
A. Beauville
\em Sur le nombre maximum de points doubles d'une surface dans 
$\PP^3$ ($\mu(5)=31$). \em
 Journ\'ees de G\'eom\'etrie Alg\'ebrique d'Angers, Juillet 1979,  Sijthoff \& Noordhoff, 207--215.
 
\bibitem[BD85]{BD}
A.\ Beauville, R.\ Donagi
\em La vari\'et\'e des droites d'une hypersurface cubique de dimension $4$. \em
C.\ R.\ Acad.\ Sci.\ Paris S\'er. I Math.\ 301(1985), 703--706.

\bibitem[Bl79]{Bl}
S.\ Bloch
\em  Torsion algebraic cycles and a theorem of Roitman. \em
Comp.\ Math.\ 39 (1979), 107--127.

\bibitem[BP20]{BP}
M.\ Bolognesi, C.\ Pedrini
\em The transcendental motive of a cubic fourfold. \em
J.\ Pure Appl.\ Algebra 224 (2020), no. 8, 106333.



\bibitem[BW74]{BW}
D.\ Burns, J.\ Wahl
\em Local Contributions to Global Deformations of Surfaces. \em
Invent.\ math.\ 26 (1974), 67--88.

\bibitem[Ca81]{Catanese}
F.\ Catanese
\em  Babbage's conjecture, contact of surfaces, symmetric determinantal varieties and applications. \em
Invent.\ math.\ 63 (1981), 433--465.

\bibitem[CML97]{CilML}
C.\ Ciliberto, M.\ Mendes Lopes
\em On surfaces of general type with $K^2=2\chi-2$ and nontrivial torsion. \em
Geometriae Dedicata 66 (1997), 313--329.

\bibitem[Di92]{Dimca}
A. Dimca
\em Singularities and topology of hypersurfaces. \em
Universitext, Springer (1992).

\bibitem[Fa04]{Fano}
G.\ Fano
\em  Sul sistema $\infty^2$ di rette contenuto in una variet\`a cubica generale dello spazio a quattro dimensioni. \em 
Math.\ Ann.\ 39 (1904), 778--792.

 
\bibitem[Fu98]{Fulton}
W.\ Fulton
\em Intersection theory. \em 2nd edition. Ergebnisse der Mathematik und ihrer Grenzgebiete. Vol.\ 2. Springer-Verlag, Berlin, 1998.

\bibitem[vdGZ78]{VdGZ}
G.\ van der Geer, D.\ Zagier
\em The Hilbert modular group for the field $\QQ(\sqrt{13})$. \em
Invent.\  math. 42 (1978), 93--134.

\bibitem[vdG88]{vdG}
G.\ van der Geer
\em Hilbert Modular Surfaces. \em
Ergebnisse der Mathematik und ihrer Grenzgebiete.  Vol.\ 16. Springer-Verlag, 1988.

\bibitem[GG12]{GG}
S.\ Gorchinskiy, V.\ Guletskii
\em Motives and representability of algebraic cycles on threefolds over a field. \em
J.\ Alg.\ Geom.\ 21 (2012), 347--373.

\bibitem[GM88]{GMref}
M.\ Goresky and R.\ MacPherson
\em Stratified Morse theory. \em Springer, Berlin, Heidelberg, (1988).

\bibitem[Gr57]{GrothToh}
A.\ Grothendieck
\em Sur quelques points d'alg\`ebre homologique. \em
T\^ohoku Math.\ J.\ 9 (1957), 119--221.

\bibitem[Ha00]{Hassett}
B.\ Hassett
\em Special cubic fourfolds. \em
Compos.\ Math.\ 120 (2000), 1--23.

\bibitem[Hu16]{HuyK3}
D.\ Huybrechts
\em Lectures on K3 surfaces. \em
Cambridge University Press (2016).

\bibitem[Hu18]{HuyMotK3}
D.\ Huybrechts
\em Motives of derived equivalent K3 surfaces. \em 
Abh.\ Math.\  Seminar  Universit\"at Hamburg 88 (2018), 201--207.

\bibitem[Hu22]{HuyCubics}
D.\ Huybrechts
\em The geometry of cubic hypersurfaces. \em
Lectures Notes \url{http://www.math.uni-bonn.de/people/huybrech/Notes.pdf}

\bibitem[Iz99]{Izadi}
E.\ Izadi
\em A Prym construction for the cohomology of a cubic hypersurface. \em
Proc.\ L.M.S.\ 79 (1999), 535--568.


\bibitem[Ma01]{Man}
M.\  Manetti 
\em On the moduli space of diffeomorphic algebraic surfaces. \em
Invent.\  math.\ 143 (2001), 29--76.


\bibitem[Mu72]{Murre}
J.\ Murre
\em Algebraic equivalence modulo rational equivalence on a cubic threefold. \em
Comp.\ Math.\ 25 (1972), 161--206.

\bibitem[Mu83]{Murre2}
J.\ Murre
\em Un r\'esultat en th\'eorie des cycles alg\'ebriques de codimension deux. \em
C.\ R.\ Acad.\ Sci.\ Paris S\'er.\ I Math. 296, 23 (1983),  981--984. 

\bibitem[OP81]{OP}
F.\ Oort, C.\ Peters
\em A Campedelli surface with torsion group ${\ZZ}/2$. \em
Akad.\ Wetensch.\ Indag.\ Math.\ 43 (1981), 399--407.

\bibitem[Pa94]{Paranjape}
K.\ Paranjape
\em Cohomological and cycle-theoretic connectivity. \em
Ann.\ Math.\ 140 (1994), 641--660.

\bibitem[Pe17]{Pedrini}
C.\ Pedrini
\em  On the rationality and the finite dimensionality of a cubic fourfold. \em
arXiv:1701.05743.

\bibitem[Ro80]{Roi}
A.\ Rojtman
\em The torsion of the group of 0-cycles modulo rational equivalence. \em
Ann.\ Math.\ 111 (1980), 553--569.

\bibitem[Sh12]{Shen0}
M.\ Shen
\em Surfaces with involution and Prym constructions. \em
arXiv:1209.5457.

\bibitem[Sh14a]{Shen}
M.\ Shen
\em On relations among 1-cycles on cubic hypersurfaces. \em
JAG 23 (2014), 539--569.

\bibitem[Sh14b]{Shen2}
M.\ Shen
\em Prym--Tjurin constructions on cubic hypersurfaces. \em
Doc.\  Math.\ 19 (2014), 867--903.

\bibitem[Sh90]{Shimada}
I.\ Shimada
\em On the cylinder isomorphism associated to the family of lines on a hypersurface. \em
J.\ Fac.\ Sci.\ Univ.\ Tokyo, Sect. IA, Math.\  37(1990), 703--719.

\bibitem[Sr96]{Srini}
V.\ Srinivas
\em Algebraic K-Theory. \em
2nd ed., Progr.\ Math. 90, Birkh\"auser Boston (1996).

\bibitem[TZ14]{TianZong}
Z.\ Tian, H.\ Zong
\em One cycles on rationally connected varieties. \em 
Comp.\ Math.\ 150 (2014), 396--408.

\bibitem[To40]{Togliatti}
E.\ Togliatti
\em  Una notevole superficie de $5^o$ ordine con soli punti doppi isolati. \em
Vierteljschr.\ Naturforsch.\ Ges.\ Z\"urich, 85 (Beibl, Beiblatt (Festschrift Rudolf Fueter)): (1940), 127--132.

\bibitem[Vi17]{Vial}
C.\ Vial
\em Remarks on motives of abelian type. \em 
Tohoku Math.\ J.\ 69 (2017), 195--220.

\bibitem[Vo86]{VoisinGT}
C.\ Voisin
\em Th\'eor\`eme de Torelli pour les cubiques de ${\mathbb P}^5$.
\em Invent.\ math.\ 86 (1986), 577--601.

\bibitem[Vo92]{VoisinLagr}
C.\ Voisin
\em Sur la stabilit\'e des sous-vari\'et\'es lagrangiennes des vari\'et\'es
symplectiques holomorphes. \em 
In Complex projective geometry (Trieste, 1989/Bergen, 1989), LMS Lecture Note Ser.\ 179, pages 294--303. Cambridge Univ. Press, Cambridge, 1992. 

\bibitem[Vo02]{VoisinHodge}
C.\ Voisin
\em Th\'eorie de Hodge et g\'eom\'etrie alg\'ebrique complexe. \em
 Cours Sp\'ecialis\'es 10. SMF Paris, 2002.

\bibitem[Xi87]{Xiao}
G.\ Xiao
\em Hyperelliptic surfaces of general type  with $K^2<4\chi$. \em manuscripta mathematica 57 (1987), 125--148.


\end{thebibliography}
\end{document}